\documentclass[a4paper,12pt]{article}

\usepackage[english]{babel}
\usepackage[english]{translator}
\usepackage[T1]{fontenc}
\usepackage[utf8]{inputenc}
\usepackage{amsmath}
\usepackage{amssymb}
\usepackage{amsfonts}
\usepackage{amstext}
\usepackage{amsthm}
\usepackage{bbm}
\usepackage{graphicx}
\usepackage{color}
\usepackage{psfrag}
\usepackage{subfigure}
\usepackage{float}
\usepackage{makecell}
\usepackage{enumerate}
\usepackage{enumitem}
\usepackage{todonotes}
\usepackage{hyperref}

\usepackage[
  hmarginratio={1:1}, 
  vmarginratio={1:1},   
  textwidth=460pt,       
  heightrounded,       
  bottom=3.65cm,
  top=3.65cm,
]{geometry}

\hypersetup{
  pdffitwindow=false,
  pdfhighlight=/O,
  pdfnewwindow,
  colorlinks=true,
  citecolor=red,       
  linkcolor=blue,        
  menucolor=blue,      
  urlcolor=blue, 
  pdfpagemode=UseOutlines,
  bookmarksnumbered=true,
  linktocpage,
  pdfkeywords={},
  pdfcreator={pdflatex},
  pdfproducer={LaTeX with hyperref}
}

\newcommand{\R}{\mathbb{R}}           
\newcommand{\Z}{\mathbb{Z}}       
\newcommand{\N}{\mathbb{N}}
\newcommand{\T}{\mathbb{T}^d}
\newcommand{\es}{\varepsilon}

\renewcommand{\i}{\mathrm{i}}

\renewcommand{\div}{\mathrm{div} \,}	
\renewcommand{\hom}{\mathrm{hom}}
\newcommand{\mubar}{\overline{\mu}_{\varepsilon}}

\newcommand{\Rdd}{\R^{d \times d}}
\newcommand{\Rspd}{\Rdd_{\mathrm{spd}}}
\newcommand{\Rsym}{\Rdd_{\mathrm{sym}}}
\newcommand{\A}{\mathcal{A}}
\newcommand{\Bx}{\Box^{-1}}
\newcommand{\bfmu}{\boldsymbol{\mu}}

\DeclareMathOperator*{\argmin}{arg\,min}

\theoremstyle{definition}
\newtheorem{definition}{Definition}[section]
\newtheorem{assumption}{Assumption}

\theoremstyle{plain}
\newtheorem{theorem}[definition]{Theorem}
\newtheorem{lemma}[definition]{Lemma}
\newtheorem{corollary}[definition]{Corollary}
\newtheorem{proposition}[definition]{Proposition}

\newenvironment{abstr}[1]{ \vspace{.05in}\footnotesize
	\parindent .2in
	{\upshape\bfseries #1. }\ignorespaces}{\par\vspace{.1in}}
\newenvironment{Abstract}{\begin{abstr}{Abstract}}{\end{abstr}}
\newenvironment{keywords}{\begin{abstr}{Key words}}{\end{abstr}}
\newenvironment{AMS}{\begin{abstr}{AMS subject classifications}}{\end{abstr}}

\allowdisplaybreaks

\begin{document}

\title{Optimization-Based Identification of Effective Coefficients for Wave Equations in Spatio-Temporal Metamaterials}

\author{Christian D\"oding$^{1,2}$, Vishnu Raveendran$^{1,3}$, Barbara Verf\"urth$^{1,4}$}
\setlength{\parindent}{0pt}
\date{}

\footnotetext[1]{Institute for Numerical Simulation, University of Bonn, 53115 Bonn, Germany}
\footnotetext[2]{e-mail: \textcolor{blue}{doeding@ins.uni-bonn.de}}
\footnotetext[3]{e-mail: \textcolor{blue}{raveendr@ins.uni-bonn.de}}
\footnotetext[4]{e-mail: \textcolor{blue}{verfuerth@ins.uni-bonn.de}}

\maketitle 

\begin{Abstract} 
    We study the identification of effective coefficients for wave equations in heterogeneous media. Such equations arise in the modeling of spatio-temporal metamaterials, where the underlying material properties exhibit variations in both space and time. While homogenization provides effective models in the asymptotic regime of vanishing microscopic scales, determining macroscopic parameters from observations of wave propagation remains challenging. We introduce an optimization-based method that identifies a constant effective coefficient by minimizing a cost functional. The approach is designed for situations in which the underlying space-time-dependent coefficient is unknown, while the solution is available in space and time by measurements. We extend optimization-based coefficient identification techniques from elliptic multiscale problems to wave equations and prove that, provided a homogenized limit exists, the identified coefficient converges to the homogenized coefficient as the microscopic scale tends to zero. Furthermore, we establish convergence of the corresponding effective solution towards the heterogeneous solution, including strong convergence in $L^2$ and weak convergence of first-order space and time derivatives. Numerical experiments further demonstrate the performance of the method for non-periodic space-time heterogeneous media, including cases for which a homogenized limit is not known to exist.
\end{Abstract}
	
\begin{keywords}
	Optimization-based identification; time-modulated wave equations; spatio-temporal metamaterials; heterogeneous media; multiscale problems; effective coefficients; homogenization.
\end{keywords}

\begin{AMS}
	 	35B27; 35L05; 35Q60; 74J25; 78-10; 78A40; 78M40
\end{AMS}

\section{Introduction}

Metamaterials are artificially engineered materials whose effective properties can exhibit phenomena not commonly observed in natural materials, including absorption, amplification, non-reciprocity, negative refraction, and cloaking; see \cite{Caloz20a,Caloz20,Cui10}. Their ability to control wave propagation has led to applications in a variety of fields, including optics, electromagnetics, acoustics, elasticity, and hydrodynamics. A particularly active research direction concerns spatio-temporal metamaterials, where the material properties are structured both in space and time \cite{Davies2025Roadmap,Garg2024TwoStep,WeZhWuLi2021}.

A basic mathematical model for wave propagation in such media is the wave equation with a space- and time-dependent coefficient
\begin{align}\label{wave}
\partial_{tt} u_\varepsilon
-
\operatorname{div}
\left(
\mu_\varepsilon(x,t)\nabla u_\varepsilon
\right)
=0,
\end{align}
subject to suitable initial and boundary conditions. The coefficient $\mu_\varepsilon(x,t)$ describes the material properties and may exhibit rapidly oscillating behavior in space and/or in time on a microscopic length scale $0<\varepsilon\ll1$. Such multiscale structures make direct numerical simulation challenging and motivate the derivation of effective models that describe the macroscopic wave dynamics through a wave equation with a space- and time-independent coefficient.

Homogenization theory provides a rigorous framework for deriving effective coefficients for highly oscillatory media in the asymptotic regime $\varepsilon\to0$. While spatial homogenization has been extensively studied for hyperbolic problems \cite{BrahimOtsmane1992Correctors,kristensson2003homogenization,Santosa1991Dispersive}, homogenization with time-modulated coefficients remains considerably less developed since the solutions can be unstable due to strong temporal oscillations \cite{colombini1984hyperbolic}. Formal homogenization of such time-modulated wave equations has been investigated in \cite{DV26,Touboul2024HighOrder}, while a rigorous homogenization analysis for space-time-modulated wave equations under weak temporal modulation is developed in \cite{CasadoDiaz2014Homogenization}. However, homogenization theory is inherently asymptotic: the homogenized coefficient approximates the heterogeneous medium only in the limit of vanishing microscopic scale. In practical applications, however, the microstructure is characterized by a fixed positive length scale $\varepsilon>0$, so that homogenized models generally approximate the heterogeneous dynamics only up to an error of order $\mathcal{O}(\varepsilon)$.

In many applications, the heterogeneous coefficient is often unknown, and only measurements of the propagating wave are available. This leads naturally to an inverse problem of determining effective material parameters directly from observations of solutions to \eqref{wave}. Recovering the full oscillatory coefficient $\mu_\varepsilon$ is generally ill-posed (cf.~\cite{Lions1978}), whereas identifying a constant effective material coefficient is feasible. Several approaches to this problem have been proposed for elliptic multiscale problems, including parameter estimation techniques \cite{Nolen2012}, separating oscillation methods \cite{Gulliksson_2016}, neural-network-based approaches \cite{PARK2022111420}, and optimization-based formulations introduced by Le Bris, Legoll, and Lemaire \cite{LeBrisLegollLemaire18,le2013approximation} and further developed in \cite{LeBrisLegollRuget26,ruget2025effective}. To the best of our knowledge, no corresponding framework has been developed for wave equations with space-time-dependent coefficients.

In this work, we extend the optimization-based framework to the wave equation \eqref{wave}. Given a solution of the heterogeneous problem, we identify an optimal constant coefficient such that the corresponding homogeneous wave equation reproduces the observed macroscopic dynamics. The coefficient is obtained by minimizing a cost functional measuring the residual of the effective equation. A key step in constructing the cost functional consists of applying the wave operator associated with the candidate coefficient and regularizing the residual with an inverse wave operator. This yields a quadratic cost functional, enabling efficient computation and rigorous analysis of the optimization problem.

We establish the asymptotic consistency of the proposed method by proving that, whenever a homogenized limit exists, the identified coefficient converges to the homogenized coefficient as $\varepsilon\to0$. We emphasize that, in periodic homogenization, the oscillatory coefficient generally does not converge to the effective coefficient either pointwise or weakly. Instead, convergence is obtained in the sense of $G$-convergence (or $H$-convergence; see \cite{tartar1997homogenization}). In contrast, our method directly determines a constant coefficient that converges pointwise to the homogenized coefficient.

Furthermore, we prove strong convergence in $L^2$ of the solution of the identified effective wave equation to the heterogeneous solution. In addition, we establish weak convergence of the first-order space and time derivatives in $L^2$.

We highlight that the proposed method is also applicable even in setting where homogenization is not available. Numerical experiments confirm our theoretical results and demonstrate that the proposed method recovers meaningful effective coefficients also for unstructured space-time-dependent coefficients, where classical homogenization techniques are not directly applicable. \\

\textbf{Outline.} In Section~\ref{sec:setting}, we introduce the mathematical framework for the identification of effective coefficients in the space-time-dependent wave equation. In Section~\ref{sec:optimization_approach}, we present the proposed optimization approach and establish the existence of minimizers for the associated cost functional. Section~\ref{sec:asymptotic_analysis} is devoted to the asymptotic analysis of the method under the assumption that a homogenized limit exists as $\varepsilon\rightarrow 0$. Finally, in Section~\ref{sec:numerical_results}, we present numerical experiments that validate the theoretical results and demonstrate the performance of the proposed approach for both homogenizable and more general heterogeneous coefficients.

\section{Time-varying wave equation} 
\label{sec:setting}
We consider the wave equation in the absence of sources with a space-time modulated coefficient
\begin{equation}  \label{wave-equation}
\begin{aligned}
	\partial_{tt} u_\varepsilon - \div ( \mu_\varepsilon(x,t) \nabla u_\varepsilon) & = 0 \quad && \text{in } \T \times (0,T) \\ 
    u_\varepsilon = v_0, \quad \partial_t u_\varepsilon & = v_1 \quad && \text{on } \T \times \{ t = 0\}.
\end{aligned}
\end{equation}
Here $\T = \R^d / \Z^d, d = 1,2,3$ is the $d$-dimensional torus, $(0,T)$ is a finite time interval, and $v_0$, $v_1$ are sufficiently smooth initial values. Considering the above wave equation on $\T$ means that we impose periodic boundary conditions in space. The coefficient $\mu_\varepsilon \in C^1([0,T],L^\infty(\T,\Rsym))$, where $\Rsym = \{ A \in \Rdd: A^\top = A \}$, and there exists a constant $\lambda>0$ such that  
\begin{align*}
    0 < \lambda |\xi|^2 \le \mu_\varepsilon(x,t) \xi \cdot \xi \quad \text{for all } \xi \in \R^d \text{ and almost all } (x,t) \in \T \times (0,T).
\end{align*}
For the initial data we assume throughout the work
\begin{align*}
    v_0 \in H^1(\T), \quad v_1 \in L^2(\T)
\end{align*}
so that the solution $u_\varepsilon$ of \eqref{wave-equation} satisfies the regularity properties (cf. \cite{RenardyRogers})
\begin{align} \label{regularity-u}
\begin{split}
    & u_\varepsilon \in L^2(0,T;H^1(\T)), \quad \partial_t u_\varepsilon \in L^2(0,T;L^2(\T)), \quad \partial_{tt} u_\varepsilon \in L^2(0,T;H^1(\T)^*), \\
    & \qquad \qquad \qquad \qquad u_\varepsilon \in C([0,T],H^1(\T)) \cap C^1([0,T],L^2(\T)).
\end{split}
\end{align}
Here we use the standard notation for Sobolev, Lebesgue and Bochner spaces. \\
In many physical applications, the explicit spatio-temporal dependence of the coefficient $\mu_\varepsilon$ is unknown. Instead, one can measure wave propagation through the modeled medium, leading to the solution $u_\varepsilon$ for prescribed initial data. Based on the knowledge of such a solution $u_\varepsilon$, we aim to recover an effective constant coefficient that describes the system governed by \eqref{wave-equation}. More precisely, our goal is to replace the spatio-temporally varying coefficient $\mu_\varepsilon$ with a constant symmetric positive semidefinite matrix $\mubar \in \A := \{ \mu \in \Rsym : 0 \le \mu \xi \cdot \xi \text{ for all } \xi \in \R^d \}$ such that the solution $u(\mubar)$ of the wave equation 
\begin{equation}  \label{mu-wave-equation}
\begin{aligned}
	\partial_{tt} u(\mubar) - \div (\mubar \nabla u(\mubar)) & = 0 \quad && \text{in } \T \times (0,T) \\ 
    u(\mubar) = v_0, \quad \partial_t u(\mubar) & = v_1 \quad && \text{on } \T \times \{ t = 0\}
\end{aligned}
\end{equation}
is a ``good'' approximation of $u_\varepsilon$. Note that $u(\mubar)$ is at least as smooth as $u_\varepsilon$ for any $\mubar \in \A$ and, in particular, satisfies \eqref{regularity-u}. The constant coefficient matrix is determined by optimizing a suitable cost functional, denoted by $\Phi_\varepsilon$. To rigorously assess the approximation properties of $\mubar$ and the corresponding solution $u(\mubar)$, we assume that there is a family of solutions $\{u_\varepsilon\}_{\varepsilon>0}$ which is ``rich'' enough and that converges, in an appropriate sense, to a homogenized limit satisfying a wave equation with a constant homogenized coefficient matrix. We specify our assumptions later in our analysis and present a particular case involving a periodic coefficient $\mu_\varepsilon$ for which the assumptions are fulfilled. Nevertheless, we emphasize that, from a computational perspective, the method is not restricted to settings in which these assumptions hold.

\section{Optimization approach}
\label{sec:optimization_approach}

The optimization approach considers the minimization problem
\begin{align} \label{minimization-problem}
	\inf_{\mu \in \A} \Phi_\varepsilon(\mu), \quad \A := \big\{ \mu \in \Rsym: \, \mu \xi \cdot \xi \ge 0 \, \, \forall \xi \in \R^d  \big\}
\end{align}
where $\Phi_\varepsilon: \A \rightarrow \R$ is an cost functional on the admissible set $\A$. The definition of the cost functional involves the solution operator
\begin{align}
    \Box^{-1}: L^2(0,T;H^1(\T)^*) \rightarrow L^2(0,T;H^1(\T))    
\end{align}
of the wave equation with unit speed and homogeneous initial data such that $v = \Box^{-1} f$ is given by the solution of
\begin{equation*}
\begin{aligned}
	\partial_{tt} v - \Delta v & = f \quad && \text{in } \T \times (0,T) \\ 
    v = 0, \quad \partial_t v & = 0 \quad && \text{on } \T \times \{ t = 0\}.
\end{aligned}
\end{equation*}
In particular, $\Box^{-1}$ is a well-defined and bounded operator, cf. \cite{Evans}. We define the cost functional as
\begin{align} \label{cost-function}
    \Phi_{\varepsilon}(\mu) :=  \| \Box^{-1}(  \partial_{tt} u_\varepsilon - \div( \mu \nabla u_\varepsilon ))\|^2_{L^2(0,T;L^2(\T))}.
\end{align}
Note that $\Phi_\varepsilon(\mu)$ is obtained by applying the operator $\Box^{-1}( \partial_{tt} - \div (\mu \nabla ))$ to the difference $u_\varepsilon - u(\mu)$ and measuring the error in $L^2(0,T;L^2(\T))$. The key idea to first apply the wave operator $\partial_{tt}-\operatorname{div}(\mu\nabla)$ to the defect $u_\varepsilon-u(\mu)$, and then map the result back to $L^2(0,T;L^2(\T))$ by means of the inverse wave operator yields an cost functional with an explicit dependence on the coefficient $\mu$. A similar strategy for elliptic problems was first proposed in \cite{LeBrisLegollLemaire18}, and we adopt a similar idea in the present setting. Furthermore, as we show below, the resulting cost functional is quadratic in $\mu$. Owing to this structure, the existence of minimizers can be established and the computation of minimizers, particularly by numerical methods, is straightforward. \\

We identify the finite dimensional space $\Rsym$ with $\R^{d_*}$ where
\begin{align*}
    d_* := \dim \Rsym = \frac{d(d+1)}{2}.
\end{align*}
For this purpose, we define an enumerating map
\begin{align} \label{enumeration}
    [\cdot,\cdot]: \{ (i,j): 1 \le i \le j \le d \} \rightarrow \{  1,\dots,d_* \}
\end{align}
that is bijective and so that $[i,j] = m$ enumerates the tuples of indices $(i,j)$, $1 \le i \le j \le d$. Then, we identify any matrix $\mu \in \Rsym$ with the vector $\bfmu \in \R^{d_*}$ via
\begin{align} \label{identification}
    \bfmu_{[i,j]} = \mu_{ij}, \quad 1 \le i \le j \le d.
\end{align}
A straightforward computation shows that the cost functional can be written as the quadratic form on $\mathbb{R}^{d_*}$,
\begin{align} \label{quadratic_function}
\Phi_\varepsilon(\mu) = \bfmu^\top A \bfmu -2 b^\top \bfmu + c,
\end{align}
where $c = \| \Box^{-1} \partial_{tt} u_\varepsilon \|_{L^2(0,T;L^2(\T))}^2$ and $A \in \R^{d_* \times d_*}$ and $b \in \R^{d_*}$ are given by
\begin{align} \label{eq:coeff}
    A_{[i,j],[k,\ell]} = (\alpha_{ij}, \alpha_{k\ell})_{L^2(0,T,L^2(\T))}, \quad b_{[i,j]} = (\Box^{-1} \partial_{tt} u_\varepsilon, \alpha_{ij})_{L^2(0,T,L^2(\T))}
\end{align}
for $1 \le i \le j < d$, $1 \le k \le \ell \le d$ and with
\begin{align*}
    \alpha_{ij} = \begin{cases} \Box^{-1} \partial_{ii} u_\varepsilon, & 1 \le i = j \le d \\
    2 \, \Box^{-1} \partial_{ij} u_\varepsilon, & 1 \le i < j \le d \end{cases}.
\end{align*}
In one spatial dimension ($d=1$), the leading-order coefficient in
\eqref{quadratic_function} reduces to
\begin{align*}
    \|\Box^{-1}\partial_{xx}u_\varepsilon\|_{L^2(0,T;L^2(\T))}^2,
\end{align*}
which is, without loss of generality, strictly positive. Consequently, the cost functional
$\Phi_\varepsilon$ is strictly convex and therefore attains a unique
minimizer on the admissible set $\A$, namely
    \begin{align} \label{minimum}
        \mubar = \max \Big\{ 0, \frac{\int_0^T (\Box^{-1} \partial_{tt} u_\varepsilon,  \Box^{-1} \partial_{xx} u_\varepsilon)_{L^2(\T)} dt}{\|   \Box^{-1} \partial_{xx} u_\varepsilon \|^2_{L^2(0,T;L^2(\T))}} \Big\}.
    \end{align}
In higher spatial dimensions, the situation is considerably more subtle.
The existence and uniqueness of minimizers over the space
$\Rsym$ are completely determined by the algebraic properties of the
matrix $A$ and the vector $b$. More precisely, if $A$ is positive
semidefinite and $b\in\operatorname{ran}(A)$, then
$\Phi_\varepsilon$ attains a minimizer over $\Rsym$. If, in addition, $A$ is positive definite the minimizer is unique and given by
\begin{align} \label{minimum_if_pos}
\mubar = A^{-1}b.
\end{align}
However, there is no a priori guarantee that this minimizer belongs to the admissible set $\A$ of symmetric positive-definite
matrices.

From a computational point of view, this is not a serious obstacle.
Once the coefficients in \eqref{quadratic_function} have been assembled,
it is straightforward to verify whether $A$ is positive definite and
whether the corresponding minimizer belongs to $\A$. Moreover, these
computations are cheap since the the system \eqref{quadratic_function} is
low dimensional. On the other hand, establishing the required algebraic
properties of $A$ by means of an a priori analytical argument
appears to be considerably more challenging.

Nevertheless, it is possible to establish the existence of minimizers
directly from the abstract cost functional
\eqref{cost-function}. The key requirement is that the measured solution
$u_\varepsilon$ is sufficiently rich, in the sense that the wave equation
\eqref{wave-equation} propagates a suitable family of Fourier
modes. This property is guaranteed, for instance, if the initial data
contain such a collection of Fourier modes. We formulate this
requirement in the following assumption, which seems to be a crucial  ingredient for proving the existence of minimizers.

\begin{assumption}[Richness of the initial data] \label{assumption2}
    Let
    \begin{align*}
      v_0(x) = \sum_{k \in \Z^d} c_k  e^{2 \pi \i k \cdot x}, 
    \end{align*}
    be the Fourier series of the initial data $v_0 \in H^1(\T)$. We assume that there is a collection of wave vectors $k_j \in \Z^d$, $j = 1,\dots, d_*$ such that $c_{k_j} \neq 0$ and
    \begin{align*}
        \mathrm{span}\big\{ k_j k_j^\top : j = 1, \dots, d_*\big\} = \Rsym.
    \end{align*}
\end{assumption}

Assumption \ref{assumption2} may appear rather abstract at first sight. We first note that it is automatically satisfied in one spatial dimension ($d=1$), provided that the initial datum $v_0$, and hence the measured solution $u_\varepsilon$, is not identically zero. 

In higher dimensions, Assumption \ref{assumption2} can be realized by combining at most $d_*$ measurements, each corresponding to the propagation of a single Fourier mode under the wave equation \eqref{wave-equation}. Owing to the linearity of the wave equation, the resulting solution can subsequently be obtained by superposition of the
propagated Fourier modes. As an example, consider the two-dimensional case ($d=2$), for
which $d_*=3$. A suitable choice of wave vectors is
\begin{align*}
k_1=
\begin{pmatrix}
1\\
0
\end{pmatrix},
\qquad
k_2=
\begin{pmatrix}
0\\
1
\end{pmatrix},
\qquad
k_3=
\begin{pmatrix}
1\\
1
\end{pmatrix}.
\end{align*}
These vectors satisfy $\mathrm{span}\{k_1k_1^\top,k_2k_2^\top,k_3k_3^\top\} = \R^{2 \times 2}_{\mathrm{sym}}$ and therefore Assumption \ref{assumption2} is fulfilled by measuring the propagation of these three Fourier modes through the heterogeneous medium. \\

Using Assumption \ref{assumption2} we now prove existence of a minimizer of \eqref{minimization-problem}. Note, however, that the minimizer does not have to be unique.

\begin{lemma} \label{lem:existence}
    Suppose Assumption \ref{assumption2} holds. Then, it holds:
    \begin{itemize}
        \item [(i)] $\Phi_\es: \A \rightarrow \R$ from \eqref{cost-function} is well-defined and continuous.
        \item[(ii)] There are $C, \alpha > 0 $ such that $\Phi_\es(\mu) \ge \alpha |\mu|^2 - C$ for all $\mu \in \A$.
        \item[(iii)] $\Phi_\es$ attains a  minimum
        \begin{align*}
            \mubar = \argmin_{\mu \in \A} \Phi_\varepsilon(\mu).
        \end{align*}
    \end{itemize}
\end{lemma}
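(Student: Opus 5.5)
Since $\Phi_\es$ is the quadratic polynomial \eqref{quadratic_function} in $\bfmu$, parts (i) and (iii) are essentially finite-dimensional, and the real content is the coercivity in (ii), i.e.\ that the Gram matrix $A$ from \eqref{eq:coeff} is positive definite under Assumption \ref{assumption2}.

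\emph{Part (i).} By \eqref{regularity-u}, $\partial_{tt} u_\es \in L^2(0,T;H^1(\T)^*)$. Since $u_\es \in L^2(0,T;H^1(\T))$ and $\mu$ is constant, also $\div(\mu\nabla u_\es) \in L^2(0,T;H^1(\T)^*)$ with norm bounded by $|\mu|\,\|u_\es\|_{L^2(0,T;H^1(\T))}$. Boundedness of $\Box^{-1}$ into $L^2(0,T;H^1(\T)) \subset L^2(0,T;L^2(\T))$ shows that $\Phi_\es(\mu)$ is finite. Linearity of $\mu \mapsto \partial_{tt}u_\es - \div(\mu\nabla u_\es)$ yields the representation \eqref{quadratic_function}, with $A$, $b$, $c$ finite. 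Hence $\Phi_\es$ is a polynomial in $\bfmu$ and in particular continuous.

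\emph{Part (ii).} It suffices to show that $A$ is positive definite, i.e.\ $\bfmu^\top A\bfmu = \|\Box^{-1}\div(\mu\nabla u_\es)\|^2_{L^2(0,T;L^2(\T))} > 0$ for every $\mu \in \Rsym\setminus\{0\}$. Then, with $\alpha_0$ the smallest eigenvalue of $A$ and using $|2b^\top\bfmu| \le \tfrac{\alpha_0}{2}|\bfmu|^2 + \tfrac{2}{\alpha_0}|b|^2$, we obtain $\Phi_\es(\mu) \ge \tfrac{\alpha_0}{2}|\bfmu|^2 - \tfrac{2}{\alpha_0}|b|^2$, which is (ii) after using the equivalence of $|\bfmu|$ and $|\mu|$.

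To prove definiteness, suppose $\Box^{-1}\div(\mu\nabla u_\es) = 0$. Since $\Box^{-1}$ is injective (apply $\partial_{tt}-\Delta$), we get $\div(\mu\nabla u_\es) = 0$ in $L^2(0,T;H^1(\T)^*)$. Testing against $e^{-2\pi\i k\cdot x}$ gives $-4\pi^2 (\mu k\cdot k)\,\hat u_\es(k,t) = 0$ for a.e.\ $t$ and every $k \in \Z^d$. Because $u_\es \in C([0,T],H^1(\T))$, each Fourier coefficient $\hat u_\es(k,\cdot)$ is continuous, and $\hat u_\es(k_j,0) = c_{k_j} \neq 0$. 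Hence $\hat u_\es(k_j,t) \neq 0$ on some interval $(0,\delta)$, and so $\mu k_j\cdot k_j = \mu : k_jk_j^\top = 0$ for $j=1,\dots,d_*$. By the spanning property in Assumption \ref{assumption2}, $\mu$ is Frobenius-orthogonal to all of $\Rsym$, so $\mu = 0$.

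The main obstacle is this definiteness argument, specifically making rigorous the passage from $\Box^{-1}\div(\mu\nabla u_\es)=0$ to a Fourier-mode identity on a set of positive time measure. The continuity in time from \eqref{regularity-u} is exactly what rescues it. Notably, no structure of $\mu_\es$ is needed, only the nonvanishing of the modes near $t=0$.

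\emph{Part (iii).} Since $\A$ is a closed subset of $\Rsym$, we can take any $\mu_0 \in \A$. By (ii), the sublevel set $\{\mu\in\A : \Phi_\es(\mu)\le\Phi_\es(\mu_0)\}$ is bounded, and by (i) it is closed. A minimizer therefore exists by Weierstrass's theorem.
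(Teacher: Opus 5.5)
Your proof is correct and follows essentially the same route as the paper: you show injectivity of $\mu\mapsto\Box^{-1}\div(\mu\nabla u_\es)$ via the Fourier modes $k_j$ of $v_0$ at $t=0$ (near $t=0$ in your version) together with the spanning property, then derive finite-dimensional coercivity, and obtain existence from (i)--(ii). The differences are cosmetic and slightly cleaner on your side: you get (i) directly from the quadratic representation \eqref{quadratic_function}, which avoids the paper's estimate $\bigl|\|a\|^2-\|b\|^2\bigr|\le\|a-b\|^2$ (not valid as written); you phrase coercivity through the smallest eigenvalue of $A$ rather than norm equivalence plus the reverse triangle inequality; and you replace Lemma~\ref{lem:appendix_uniqueness} by Frobenius orthogonality.
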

\begin{proof} (i) As the regularity \eqref{regularity-u} holds, the cost functional $\Phi_\varepsilon$ is well-defined. We note that the operator $\Box^{-1}$ is bounded, i.e.,
        \begin{align*}
            \| \Box^{-1} f \|_{L^2(0,T;L^2(\Omega))} \le C \| f  \|_{L^2(0,T;H^1(\Omega)^*)}.
        \end{align*}
        Then, we have for $\mu_1,\mu_2 \in \A$ and with $P_\varepsilon(\mu) := \partial_{tt}u_\es-\div(\mu\nabla u_\es)$,
        \begin{align*}
        \begin{split}
            |\Phi_\es(\mu_1)-\Phi_\es(\mu_2)|&=\Big|\|\Bx P_\es (\mu_1)\|_{L^2(0,T;L^2(\T))}^2-\|\Bx P_\es (\mu_2)\|_{L^2(0,T;L^2(\T))}^2\Big| \\
            &\leq \|\Bx(P_\es (\mu_1)-P_\es (\mu_2))\|_{L^2(0,T;L^2(\T))}^2 \\
            &\leq C\|P_\es (\mu_1)-P_\es (\mu_2)\|_{L^2(0,T;H^1(\T)^*)}^2 \\
            &=C \|\div((\mu_1-\mu_2)\nabla u_\es)\|_{L^2(0,T;H^1(\T)^*)}^2.
        \end{split}
        \end{align*}
        Now, for any $\phi\in L^2(0,T;H^1(\T))$, we have
        \begin{align*}
            \int_0^T\langle\div((\mu_1-\mu_2)\nabla u_\es),\phi\rangle dt&=-\int_0^T\int_{\T}(\mu_1-\mu_2)\nabla u_\es\nabla\phi\,dxdt\\
            &\leq \|\mu_1-\mu_2\|\|\nabla u_\es\|_{L^2(0,T;L^2(\T))}\|\nabla \phi\|_{L^2(0,T;L^2(\T))}.
        \end{align*}
        This implies
        \begin{align*}
            \|\div((\mu_1-\mu_2)\nabla u_\es)\|_{L^2(0,T;H^1(\T)^*)}\leq |\mu_1-\mu_2 | \, \|\nabla u_\es\|_{L^2(0,T;L^2(\T))}
        \end{align*}
        which proves the continuity of $\Phi_\varepsilon$. \\

        (ii)  Let $Q_\es(\mu):=\Bx(\div(\mu\nabla u_\es))$ for any $\mu\in\A$. Clearly $Q_\es$ is a linear operator. We first show that $Q_\es$ is a one-to-one mapping. Let $Q_\es(\mu_0)=0$ for some $\mu_0\in\A$, then 
        \begin{align*}
            \Bx(\div(\mu_0\nabla u_\es))=0.
        \end{align*}
        Since $\Bx$ is one-to-one mapping, we conclude
        \begin{align} \label{eq1:proof_coercivity}
            \div(\mu_0\nabla u_\es)=0 \quad \text{in } L^2(0,T;H^1(\T)^*).
        \end{align}
        We have $u_\es \in C([0,T], H^1(\T))$, cf. \eqref{regularity-u}, then the equation \eqref{eq1:proof_coercivity} holds pointwise for all  $t \in [0,T]$ and in $H^1(\T)^*$. In particular, we can set $t = 0$ and obtain
        \begin{align} \label{eq2:proof_coercivity}
            \div(\mu_0\nabla v_0 ) = 0 \quad \text{in } H^1(\T)^*.
        \end{align}
        Plugging in the Fourier series from Assumption \ref{assumption2} into \eqref{eq2:proof_coercivity}, yields
        \begin{align*}
            k_j^\top \mu_0 k_j = 0, \quad j = 1,\dots,d_*.
        \end{align*}
        By Lemma \ref{lem:appendix_uniqueness} we conclude $\mu_0 = 0$, which shows that $Q_\es$ is a one-to-one mapping. Since $\A$ is a finite-dimensional space and the linear map $Q_\es$ defined on $\A$ is one-to-one, there exist a $C>0$, such that
        \begin{align*}
           |\mu| \leq C \|Q_\es(\mu)\|_{L^2(0,T;L^2(\T))}.
        \end{align*}
        It follows,
        \begin{align*}
   \sqrt{\Phi_\es(\mu)}&=\|\Bx\partial_{tt}u_\es-\Bx (\div(\mu \nabla u_\es))|_{L^2(0,T;L^2(\T))}\\
   &\geq \|\Bx (\div(\mu \nabla u_\es))|_{L^2(0,T;L^2(\T))}-\|\Bx\partial_{tt}u_\es\|_{L^2(0,T;L^2(\T))}\\
   &\geq C | \mu |-\|\Bx\partial_{tt}u_\es\|_{L^2(0,T;L^2(\T))}.
        \end{align*}

        (iii) follows from (i) and (ii).
\end{proof}

\section{Asymptotic consistency analysis}
\label{sec:asymptotic_analysis}

The optimization approach proposed in Section~\ref{sec:optimization_approach} should be consistent with homogenization theory in the sense that the identified effective coefficient recovers the homogenized coefficient whenever the latter exists. Although homogenization theory for wave equations with space-time-dependent heterogeneous coefficients is considerably less developed than its counterpart for purely spatially varying media, rigorous homogenization results are available in certain settings, see \cite{CasadoDiaz2014Homogenization}, while in other cases the existence of an effective limit is supported by formal asymptotic arguments, see \cite{DV26}.

In this section, we establish the asymptotic consistency of the proposed identification approach. More precisely, we show that the optimal coefficient $\mubar$ converges to the homogenized coefficient $\mu_{\hom}$ as the microscopic length scale vanishes, $\varepsilon \rightarrow 0$. Moreover, we show convergence of the solution $u(\mubar)$ of the identified effective model towards the homogenized solution $u_{\hom}$ strongly in $L^2(0,T;L^2(\T))$ and weak convergence for the first space and time derivatives. Rather than relying on a specific homogenization result, our analysis is formulated under an abstract assumption on the existence of a homogenized limit.

\begin{assumption} \label{assumption1}
Let $\{\mu_\varepsilon\}_{\varepsilon>0}$ be a family of coefficients such that the associated family of solutions $\{u_\varepsilon\}_{\varepsilon>0}$ to \eqref{wave-equation} satisfies \eqref{regularity-u}. We assume that there exists a function $u_\hom \in L^2(0,T;H^1(\T))$ with $\partial_t u_{\hom}\in L^2(0,T;L^2(\T))$ such that
\begin{align}\label{eq:weak_convergence_of_nabla_u_epsilon}
\nabla u_\varepsilon \rightharpoonup \nabla u_{\hom},
\qquad
\partial_t u_\varepsilon \rightharpoonup \partial_t u_{\hom}
\end{align}
weakly in $L^2(0,T;L^2(\T))$ as $\varepsilon \to 0$ and $u_\hom$ satisfies a homogenized wave equation
\begin{equation}  \label{hom-equation}
\begin{aligned}
	\partial_{tt} u_{\hom} - \div( \mu_{\hom} \nabla u_{\hom}) & = 0 \quad && \text{in } \T \times (0,T) \\ 
    u_{\hom} = v_0, \quad \partial_t u_{\hom} & = v_1 && \text{on } \T \times \{ t = 0\}
\end{aligned}
\end{equation}
for some positive definite $\mu_{\hom}\in\Rsym$.
\end{assumption}

We denote by $Y$ the unit cube $(0,1)^d \subset \R^d$. Then, an example coefficient family for which Assumption \ref{assumption1} is satisfied is given by
\begin{align} \label{ass:periodic}
    \mu_\varepsilon(x,t) = \mu_0(x/\varepsilon) + \varepsilon \mu_1(t/\varepsilon), \quad \varepsilon > 0,
\end{align}
where $\mu_0 \in L^\infty(Y,\Rspd)$ is periodic in $Y$ and $\mu_1 \in C^1((0,1);\Rspd)$ is $1$-periodic.
In this case, we obtain a uniform energy bound for $u_\es$ which implies the weak convergence \eqref{eq:weak_convergence_of_nabla_u_epsilon} (see \cite{CasadoDiaz2014Homogenization}). The limit $u_{\hom}$ satisfies the homogenized equation \eqref{hom-equation}, 
with the homogenized coefficient $\mu_{\hom} \in \Rspd$ is given by
\begin{align*}
    \big( \mu_{\hom} \big)_{ij} = \int_Y \mu_0(y)(e_i + \nabla w_i ) \cdot (e_j + \nabla w_j ) \, dy, \quad i,j = 1,\dots, d
\end{align*}
where $e_i \in \R^d$ is the $i$-th canonical unit vector and $w_i \in H^1(Y)$ is the unique periodic solution to the cell-problem
\begin{align*}
    - \div ( \mu_0(y) (e_i + \nabla w_i ) ) = 0 \quad \text{in } Y, \quad \int_Y w_i(y) \, dy = 0.
\end{align*}
We note that for $d = 1$ the homogenized coefficient is simply given by the harmonic mean
\begin{align*}
    \mu_\hom = \Big( \int_0^1 \mu_0(y)^{-1} \, dy \Big)^{-1}.
\end{align*}

Further, we emphasize that Assumption \ref{assumption1} does not require precise knowledge of the homogenized coefficient $\mu_\hom$ and only assumes its existence. G-convergence or H-convergence to a homogenized coefficient, as in \cite{tartar1997homogenization}, for which an explicit formula is typically not available, is sufficient. \\
Finally, we note that if $u_\hom$ from Assumption \ref{assumption1} exists, then it has the regularity
\begin{align*}
    u_{\hom} \in L^2(0,T;H^1(\T)), \quad \partial_t u_{\hom} \in L^2(0,T;L^2(\T)), \quad \partial_{tt} u_{\hom} \in L^2(0,T;H^1(\T)^*).
\end{align*}

\subsection{Convergence of the minimizing coefficient}

In this section, we aim to prove the first main result regarding consistency, namely the convergence of the minimizing coefficient towards the homogenized coefficient. As a first step, we show that the cost functional converges to zero.

\begin{theorem}\label{Theorem:consistancy}
Suppose that Assumption \ref{assumption1} holds. Then,
\begin{align*}
    \inf_{\mu \in \A} \Phi_\varepsilon(\mu) \rightarrow 0 \quad \text{as} \quad \varepsilon \rightarrow 0.
\end{align*}

\end{theorem}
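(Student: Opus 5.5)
Since $\mu_\hom$ is positive definite, it lies in $\A$, so $0 \le \inf_{\mu\in\A}\Phi_\es(\mu) \le \Phi_\es(\mu_\hom)$. It therefore suffices to show $\Phi_\es(\mu_\hom)\to 0$. Using the equation \eqref{wave-equation} for $u_\es$, we rewrite the residual as
\begin{align*}
\partial_{tt}u_\es - \div(\mu_\hom\nabla u_\es) = \div\big((\mu_\es-\mu_\hom)\nabla u_\es\big) =: f_\es,
\end{align*}
so $\Phi_\es(\mu_\hom) = \|\Bx f_\es\|^2_{L^2(0,T;L^2(\T))}$. The strategy is to show that $f_\es$ is bounded in $L^2(0,T;H^1(\T)^*)$ and tends to zero in a weak sense. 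Compactness of $\Bx$ into $L^2(0,T;L^2(\T))$ will then upgrade this to strong convergence.

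\textbf{Weak limit.} Set $w_\es := \Bx f_\es$. By linearity of $\Bx$, and because $\Bx$ applied to $\partial_{tt}-\Delta$ of a function returns the function minus the solution of the free wave equation with its initial data, we have
\begin{align*}
w_\es = \Bx\big(\partial_{tt}u_\es - \div(\mu_\hom\nabla u_\es)\big).
\end{align*}
Since $\mu_\hom$ is constant, the map $u\mapsto \Bx(\partial_{tt}u-\div(\mu_\hom\nabla u))$ is linear. It can be tested in weak form against smooth $\phi$ solving the adjoint unit-speed wave problem with vanishing final data. This gives
\begin{align*}
(w_\es,\psi) = -\int_0^T\!\!\int_{\T}\partial_t u_\es\,\partial_t\phi + \int_0^T\!\!\int_{\T}\mu_\hom\nabla u_\es\cdot\nabla\phi - \int_{\T} v_1\phi(0)
\end{align*}
for $\psi$ smooth, where $\phi$ is the backward solution with source $\psi$. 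The initial data of $u_\es$ are $\es$-independent, so the boundary term is fixed. By Assumption \ref{assumption1}, $\partial_t u_\es\rightharpoonup\partial_t u_\hom$ and $\nabla u_\es\rightharpoonup \nabla u_\hom$, so the right-hand side converges to the same expression with $u_\hom$. That limit vanishes because $u_\hom$ solves \eqref{hom-equation}. Hence $w_\es\rightharpoonup 0$ weakly, after a density argument using boundedness.

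\textbf{Boundedness and compactness.} Weak convergence gives uniform bounds on $\|\nabla u_\es\|$ and $\|\partial_t u_\es\|$ in $L^2(0,T;L^2(\T))$. By the same duality, $w_\es$ is then bounded in $L^2(0,T;L^2)$ and in fact has one more degree of regularity. Concretely, $w_\es$ solves a unit-speed wave equation with source $\partial_{tt}u_\es-\div(\mu_\hom\nabla u_\es)$. Writing $w_\es = \Bx\partial_{tt}u_\es - \Bx\div(\mu_\hom\nabla u_\es)$ and using time integration by parts ($\Bx\partial_t = \partial_t\Bx$ up to initial terms), both pieces are bounded in $H^1$ of space-time by $\|\partial_t u_\es\|+\|\nabla u_\es\|+\|v_1\|$. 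For example, $\Bx\partial_{tt}u_\es = \partial_t\Bx\partial_t u_\es - \Bx v_1$, and $\Bx$ maps $L^2L^2$ into $C^1L^2\cap CH^1$. Rellich's theorem on $\T\times(0,T)$ (or Aubin--Lions) then gives precompactness of $\{w_\es\}$ in $L^2(0,T;L^2(\T))$. Combined with $w_\es\rightharpoonup0$, this yields $w_\es\to0$ strongly, i.e.\ $\Phi_\es(\mu_\hom)\to0$.

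\textbf{Main obstacle.} The main obstacle is this compactness step: we must verify rigorously the space-time $H^1$ bound on $w_\es$ using only the weak convergence hypotheses, and handle the initial-data terms carefully. The key point is that $\Bx$ gains one full derivative. I would first try the identity $\Bx(\partial_{tt}u - \Delta u) = u - u^{\mathrm{free}}$ to split $w_\es = (u_\es - u^{\mathrm{free}}) + \Bx\div((\mathrm{I}-\mu_\hom)\nabla u_\es)$. Here $u^{\mathrm{free}}$ is the fixed free solution, $u_\es$ is bounded in $H^1$ of space-time, and the last term is bounded in $CH^1\cap C^1L^2$ by energy estimates for the source $\div F$ with $F\in L^2L^2$. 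Since the first piece only needs compactness of $u_\es$ in $L^2L^2$ (Aubin--Lions from the $H^1$ bounds), the remaining concern is the second piece. If the energy estimate for an $H^{-1}$-source proves delicate, I would fall back on testing the source against $\partial_t$ of the solution after integrating once in time.
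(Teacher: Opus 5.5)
Your compactness step fails, and the whole argument rests on it. The operator $\Bx$ gains exactly one derivative. It maps $L^1(0,T;H^1(\T)^*)$ into $C([0,T];L^2(\T))\cap C^1([0,T];H^1(\T)^*)$, not into $C([0,T];H^1(\T))\cap C^1([0,T];L^2(\T))$.

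Your own identity shows this. Written correctly, it reads $\Bx\partial_{tt}u_\es=\partial_t\Bx(\partial_t u_\es-v_1)$, the time derivative of a function in $C^1L^2\cap CH^1$, so in general it lies only in $C(L^2)$. Likewise, for $F$ bounded in $L^2(0,T;L^2(\T))$, $\Bx\div F$ is bounded only in $C(L^2)$. That is exactly what your fallback delivers, as does the paper's energy estimate with test function $-\Delta^{-1}\partial_t v_\es$.

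Neither the bound nor compactness can be improved. For $d=1$ and $F_n=\cos(2\pi n(x-t))$ one computes
\[
\Bx\partial_x F_n=-\tfrac{t}{2}\cos\big(2\pi n(x-t)\big)+\mathcal{O}(n^{-1}) \quad \text{in } L^\infty(0,T;L^2(\T)).
\]
This sequence is weakly null and has $H^1$-norm of order $n$. Its $L^2(0,T;L^2(\T))$-norm stays bounded away from zero. Hence $\Bx\div$ is not compact on $L^2(0,T;L^2(\T))$, and Rellich or Aubin--Lions cannot be invoked.

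In your splitting, the piece $u_\es-u^{\mathrm{free}}$ is fine. The other piece, $\Bx\div((\mathrm{I}-\mu_\hom)\nabla u_\es)$ with $\nabla u_\es$ only weakly convergent, is exactly of this type unless $\mu_\hom=\mathrm{I}$. What you actually prove is $\sup_\es\Phi_\es(\mu_\hom)<\infty$ together with $w_\es\rightharpoonup 0$. That does not imply $\|w_\es\|_{L^2(0,T;L^2(\T))}\to 0$.

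The missing ingredient is information ruling out oscillations of $\nabla u_\es$ that resonate with the unit-speed operator. In the periodic example \eqref{ass:periodic} the leading oscillations are slow in time, so this is expected to hold there. The weak convergence in Assumption \ref{assumption1} alone does not provide it.

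The paper establishes the same two facts: the uniform bound via the $H^{-1}$-type energy estimate, and $\Box v_\es\rightharpoonup 0$. It then concludes via the duality identity $\Phi_\es(\mu_\hom)=\int_0^T\!\int_{\T}\Box v_\es\,\tilde{\Box}^{-1}v_\es\,dx\,dt$ rather than by compactness. This does not remove your obstacle.

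That identity pairs a sequence that is only weakly null in $L^2(0,T;H^1(\T)^*)$ with the $\es$-dependent function $\tilde{\Box}^{-1}v_\es$. That function is bounded in $L^2(0,T;H^1(\T))$ but is not shown to converge strongly there. Moreover, $\tilde{\Box}^{-1}$ is not compact from $L^2(0,T;L^2(\T))$ into $L^2(0,T;H^1(\T))$, by the time-reversed version of the example above.

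So the upgrade from weak to strong convergence, which you correctly flagged as the main obstacle, needs an additional argument in either route.
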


\begin{proof}

First, we show that $\Phi_\varepsilon(\mu_\hom)$ is uniformly bounded. Let $v_\varepsilon := \Box^{-1} \big(\partial_{tt} u_{\varepsilon} - \div (\mu_{\hom} \nabla u_{\varepsilon}) \big)$. Then, using \eqref{wave-equation}, we obtain that $v_\varepsilon$ solves 
\begin{equation*}
\begin{aligned}
	\partial_{tt} v_\varepsilon - \Delta v_\varepsilon & = \div \big( (\mu_{\hom} - \mu_\varepsilon ) \nabla u_\varepsilon \big) \quad && \text{in } \T \times (0,T), \\ 
    v_\varepsilon & = 0, \quad \partial_t v_\varepsilon = 0 && \text{on } \T \times \{ t = 0\}.
\end{aligned}
\end{equation*}
Therefore, for any test function $\phi \in H^1(\T)$ and all $t \in (0,T)$
\begin{align} \label{weak-form}
    \langle \partial_{tt}v_\varepsilon(t),\phi \rangle +\int_{\T}\nabla v_\es(t) \cdot \nabla \phi \, dx = \langle \div \big( (\mu_{\hom}-\mu_\es(t))\nabla u_\es(t) \big), \phi\rangle.
\end{align}
In the following we choose for $t \in (0,T)$ the test function $\phi(t) = - \Delta^{-1} (\partial_t v_\varepsilon(t))$, i.e., $\phi(t) \in H^1(\T)$ is the unique solution to 
\begin{align*}
    -\Delta \phi(t) =\partial_t v_\es(t) \quad \mbox{in } \T, \quad \int_{\T} \phi(t) \, dx = 0.
\end{align*}
Then, we have $\| \nabla \phi(t) \|_{L^2}^2 = \langle \partial_t v_\varepsilon(t), \phi(t) \rangle$. Furthermore, we have the identities
\begin{align*}
    \langle\partial_{tt}v_\varepsilon(t),\phi(t) \rangle&=\frac{1}{2}\frac{d}{dt}\| \nabla \phi(t) \|_{L^2(\T)}^2,\\
    \int_{\T}\nabla v_\es(t) \cdot \nabla \phi(t) \, dx&=\frac{1}{2}\frac{d}{dt} \| v_\es(t) \|_{L^2(\T)}^2, \\
    \langle\div \big( (\mu_{\hom}-\mu_\es(t))\nabla u_\es(t) \big), \phi(t) \rangle&=-\int_{\T}(\mu_{\hom}-\mu_\es(t))\nabla u_\es(t) \cdot \nabla \phi(t) \,dx.
\end{align*}
We obtain from \eqref{weak-form} with $\phi = - \Delta^{-1}(\partial_t v_\varepsilon (t) )$
\begin{align*}
    \frac{1}{2}\frac{d}{dt}\Big(\| \nabla \phi(t) \|_{L^2(\T)}^2+\| v_\es(t)\|_{L^2(\T)}^2\Big) & =-\int_{\T}(\mu_{\hom}-\mu_\es(t))\nabla u_\es(t) \cdot  \nabla\phi(t) \,dx\\
    &\leq \|\mu_\es(t) -\mu_{\hom}\|_{L^\infty(\T)}\|\nabla u_\es(t) \|_{L^2(\T)}\|\nabla \phi(t) \|_{L^2(\T)}.
\end{align*}
Let $p_\es(t) := \| \nabla \phi(t) \|_{L^2(\T)}^2 + \| v_\es(t)\|_{L^2(\T)}^2$ and since $\mu_\es$ is bounded, we have
\begin{align*}
     \frac{1}{2}\frac{d}{dt} p_\es(t) \leq C \|\nabla u_\es(t) \|_{L^2(\T)}\sqrt{p_\es(t)}.
\end{align*}
Therefore, 
\begin{align*}
     \frac{d}{dt}(\sqrt{p_\es(t)})\leq C \|\nabla u_\es(t) \|_{L^2(\T)}.
\end{align*}
Integrating the aforementioned inequality from $0$ to $T$ w.r.t.~$t$, using the fact that $p_\es(0) = 0$ and the uniform bound for $\|\nabla u_\es \|_{L^2(0,T;L^2(\T))}$ implied by Assumption \ref{assumption1}, we conclude
\begin{align*}
    \Phi_\varepsilon(\mu_\hom) = \|v_\es\|_{L^2(0,T;L^2(\T))}^2 \leq p_\es(T) \leq C.
\end{align*}
Since $\Phi_{\varepsilon}(\mu_{\hom})$ is uniformly bounded, we can extract a subsequence such that $\Phi_{\varepsilon}(\mu_{\hom}) \rightarrow \Phi_{\star}$ as $\varepsilon \rightarrow 0$. We show that $\Phi_\star = 0$. By the definition of $v_\varepsilon$ and the weak convergence in Assumption \ref{assumption1} we infer for an arbitrary test function $\phi \in C^\infty((0,T) \times \T)$ with $\phi(T,\cdot) = 0$ that
\begin{align*}
    & \int_0^T \int_{\T} \Box v_\varepsilon \, \phi \, dxdt = \int_0^T \int_{\T} (\partial_{tt} u_\varepsilon - \div( \mu_\hom \nabla u_\varepsilon )) \, \phi \, dxdt \\
    & = - \int_0^T \int_{\T} \partial_t u_\varepsilon \, \partial_t \phi \, dxdt + \int_0^T \int_{\T} \mu_\hom \nabla u_\varepsilon \cdot \nabla \phi \, dxdt + \int_0^T \int_{\T} v_1 \, \phi(0) \, dxdt \\
    & \rightarrow - \int_0^T \int_{\T} \partial_t u_\hom \, \partial_t \phi \, dxdt + \int_0^T \int_{\T} \mu_\hom \nabla u_\hom \cdot \nabla \phi \, dxdt + \int_0^T \int_{\T} v_1 \, \phi(0) \, dxdt \\
    & = 0, \quad \text{as } \varepsilon \rightarrow 0,
\end{align*}
where we used that $u_\hom$ solves \eqref{hom-equation}. Therefore,
\begin{align} \label{eq:weak_conv}
    \Box v_\varepsilon \rightharpoonup 0 \quad \text{ in } L^2(0,T;L^2(\T)).
\end{align}
Next, we define $\tilde{\Box}^{-1}: L^2(0,T;H^1(\T)^*) \rightarrow  L^2(0,T;H^1(\T))$ to be the solution operator of the wave equation with zero final time condition, i.e., $w = \tilde{\Box}^{-1} f$ is the unique solution to
 \begin{equation*} 
\begin{aligned}
	\partial_{tt} w - \Delta w & = f \quad && \text{in } \T \times (0,T) \\ 
    w = 0, \quad \partial_t w & = 0 && \text{on } \T \times \{ t = T\}.
\end{aligned}
\end{equation*}
Then, $\tilde{\Box}^{-1}$ is well-defined and the identity $\Box \tilde{\Box}^{-1} f = f$ holds in $L^2(0,T;H^1(\T)^*)$. With $w_\varepsilon := \tilde{\Box}^{-1} v_\varepsilon \in L^2(0,T;H^1(\T))$ we conclude using \eqref{eq:weak_conv},
\begin{align*}
    \Phi_\varepsilon(\mu_\hom) & = \int_0^T \int_{\T} |v_\varepsilon|^2 \, dxdt = \int_0^T \int_{\T} v_\varepsilon \, \Box w_\varepsilon \, dxdt \\
    & = \int_0^T \int_{\T} v_\varepsilon \, \partial_{tt} w_\varepsilon \, dxdt + \int_0^T \int_{\T} \nabla v_\varepsilon \cdot \nabla w_\varepsilon \, dxdt \\
    & = - \int_0^T \int_{\T} \partial_t v_\varepsilon \, \partial_{t} w_\varepsilon \, dxdt + \int_{\T} \big( v_\varepsilon(T)  \underbrace{\partial_t w_\varepsilon (T)}_{=0}  - \underbrace{v_\varepsilon(0)}_{= 0}\partial_t w_\varepsilon (0) \big) dx \\
    & \quad +  \int_0^T \int_{\T} \nabla v_\varepsilon \cdot \nabla w_\varepsilon \, dxdt \\
    & = \int_0^T \int_{\T} \partial_{tt} v_\varepsilon \, w_\varepsilon \, dxdt + \int_{\T} \big( \partial_t v_\varepsilon(T)  \underbrace{\partial_t w_\varepsilon (T)}_{=0}  - \underbrace{\partial_t v_\varepsilon(0)}_{= 0}\partial_t w_\varepsilon (0) \big) dx \\
    & \quad +  \int_0^T \int_{\T} \nabla v_\varepsilon \cdot \nabla w_\varepsilon \, dxdt \\
    & = \int_0^T \int_{\T} (\partial_{tt} v_\varepsilon - \Delta v_\varepsilon) w_\varepsilon \, dxdt = \int_0^T \int_{\T} \Box v_\varepsilon \, w_\varepsilon \, dxdt \rightarrow 0 \quad \text{as } \varepsilon \rightarrow 0.
\end{align*}
This shows $\Phi_\star = 0$ and it follows
\begin{align*}
    0 \le \inf_{\mu \in \A} \Phi_\varepsilon(\mu) \le \Phi_{\varepsilon}(\mu_{\hom}) \rightarrow \Phi_{\star} = 0 \quad \text{as } \varepsilon \rightarrow 0.
\end{align*}
\end{proof}

As a next step, we show that if $u_{\hom}$ from Assumption \ref{assumption1} solves a wave equation with constant coefficient, the coefficient already needs to be $\mu_{\hom}$. In other words, this means that the constant coefficient in the wave equation is uniquely determined from the knowledge of the solution.

\begin{proposition}\label{Lemma:uniqueness_of_mu}
    Suppose Assumption \ref{assumption2}-\ref{assumption1} and let
    \begin{align*}
        F: \Rsym \rightarrow L^2(0,T;H^1(\T)^*), \quad \mu \mapsto \partial_{tt} u_\hom - \div(\mu \nabla u_\hom).
    \end{align*}
    Then $F$ has a unique root at $\mu = \mu_\hom$, i.e., $F(\mu_\hom) = 0$. 
\end{proposition}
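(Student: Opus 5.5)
Existence of a root is immediate from Assumption \ref{assumption1}: $u_\hom$ solves \eqref{hom-equation}, so $F(\mu_\hom)=\partial_{tt}u_\hom-\div(\mu_\hom\nabla u_\hom)=0$ in $L^2(0,T;H^1(\T)^*)$. The work is uniqueness. Suppose $F(\mu)=0$ for some $\mu\in\Rsym$. Subtracting $F(\mu_\hom)=0$ and using linearity of $F$ in $\mu$, the matrix $\mu_0:=\mu-\mu_\hom$ satisfies
\begin{align*}
\div(\mu_0\nabla u_\hom)=0 \quad \text{in } L^2(0,T;H^1(\T)^*).
\end{align*}
I would then argue exactly as in part (ii) of Lemma \ref{lem:existence}: evaluate this identity at $t=0$, expand $v_0$ in Fourier series, and conclude $\mu_0=0$ from Assumption \ref{assumption2} and Lemma \ref{lem:appendix_uniqueness}.

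The main obstacle is that Assumption \ref{assumption1} only gives $u_\hom\in L^2(0,T;H^1(\T))$ and $\partial_t u_\hom\in L^2(0,T;L^2(\T))$, so $u_\hom$ is a priori only in $C([0,T];L^2(\T))$. Hence evaluating $\div(\mu_0\nabla u_\hom(t))$ at $t=0$ needs justification. My first approach is to use well-posedness of the constant-coefficient problem \eqref{hom-equation}. Since $\mu_\hom$ is positive definite and $v_0\in H^1(\T)$, $v_1\in L^2(\T)$, the unique weak solution satisfies $u_\hom\in C([0,T];H^1(\T))\cap C^1([0,T];L^2(\T))$, by the same reference as \eqref{regularity-u}. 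Then $t\mapsto \div(\mu_0\nabla u_\hom(t))\in H^1(\T)^*$ is continuous. It vanishes for a.e.\ $t$, hence for all $t$, and in particular
\begin{align*}
\div(\mu_0\nabla v_0)=0 \quad \text{in } H^1(\T)^*.
\end{align*}

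If one prefers to avoid the regularity theory, there is an alternative. Work with the Fourier coefficients $\hat u_\hom(k,t)=(u_\hom(t),e^{2\pi\i k\cdot x})_{L^2(\T)}$, which are continuous in $t$ with $\hat u_\hom(k,0)=c_k$. The identity $\div(\mu_0\nabla u_\hom)=0$ tested against $\psi(t)e^{2\pi\i k\cdot x}$ gives $(k^\top\mu_0 k)\,\hat u_\hom(k,t)=0$ for a.e.\ $t$. By continuity in $t$ this holds at $t=0$, so $(k^\top\mu_0k)\,c_k=0$ for every $k$.

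Either route yields, for the wave vectors of Assumption \ref{assumption2}, $4\pi^2 (k_j^\top\mu_0k_j)\,c_{k_j}=0$. Since $c_{k_j}\neq0$, this gives $k_j^\top\mu_0k_j=\mu_0: k_jk_j^\top=0$ for $j=1,\dots,d_*$. The matrices $k_jk_j^\top$ span $\Rsym$ and $\mu_0$ is symmetric, so $\mu_0$ is orthogonal to all of $\Rsym$ in the Frobenius inner product. Hence $\mu_0=0$, which is precisely Lemma \ref{lem:appendix_uniqueness}, and $\mu=\mu_\hom$ is the unique root.

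Note that $F$ is defined on all of $\Rsym$, not just $\A$. This is harmless, because the argument never uses a sign condition on $\mu$.
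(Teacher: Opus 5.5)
Your proposal is correct and follows essentially the paper's argument: you use the Fourier coefficients of $u_\hom$ at the modes $k_j$, which do not vanish near $t=0$ because $c_{k_j}\neq 0$, to get $k_j^\top(\mu-\mu_\hom)k_j=0$, and then invoke Lemma \ref{lem:appendix_uniqueness}. Subtracting $F(\mu_\hom)$ first, as you do, removes the need for the paper's ODE and for dividing by $\hat u_{k_j}(t_\star)$, and your Frobenius-orthogonality observation is a compact proof of Lemma \ref{lem:appendix_uniqueness}.
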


\begin{proof}
    By Assumption \ref{assumption1} we have $F(\mu_\hom) = 0$, so it remains to show that $\mu_\hom$ is the only root of $F$. We consider the Fourier series for $u_{\hom}(\cdot,t) \in L^2(\T)$ given by
    \begin{align*}
        u_\hom(x,t) = \sum_{k \in \Z^d} \hat{u}_k(t) e^{2\pi \i k \cdot x}.
    \end{align*}
    For any $k_j \in \Z^d$, $j = 1,\dots,d_*$ from Assumption \ref{assumption2} we have, by \eqref{hom-equation}, that $\hat u_{k_j} \in C^2(0,T;\R)$ is the unique solution to
    \begin{align*}
        \hat{u}_{k_j} '' + (k_j^\top \mu_\hom k_j) \hat u_{k_j} = 0, \quad \hat{u}_{k_j}(0) = c_{k_j}, \quad \hat{u}_{k_j}'(0) = b_{k_j}.
    \end{align*}
    Here $c_{k_j}$ is the Fourier coefficient from Assumption \ref{assumption2} and $b_{k_j}$ is the corresponding Fourier coefficient of $v_1$, i.e., $v_1(x) = \sum_{k \in \Z^d} b_k e^{2\pi \i k\cdot x}$. Since $c_{k_j} \neq 0$, by Assumption \ref{assumption2}, there is $t_\star \in (0,T)$ such that $\hat u_{k_j}(t_\star) \neq 0$. We conclude
    \begin{align*}
        0 < k_j^\top \mu_\hom k_j = -\frac{\hat{u}_{k_j}''(t_\star)}{\hat{u}_{k_j}(t_\star)}, \quad j = 1,\dots, d_\star.
    \end{align*}
    Now let $\mu \in \Rsym$ be another root of $F$, i.e., $\partial_{tt} u_\hom - \div( \mu \nabla u_\hom) = F(\mu) = 0$. Then, using the Fourier series if $u_\hom$, we obtain
    \begin{align*}
        \hat{u}_{k_j} '' + (k_j^\top \mu k_j) \hat u_{k_j} = 0, \quad \hat{u}_{k_j}(0) = c_{k_j}, \quad \hat{u}_{k_j}'(0) = b_{k_j}
    \end{align*}
    and
    \begin{align*}
         k_j^\top \mu k_j = -\frac{\hat{u}_{k_j}''(t_\star)}{\hat{u}_{k_j}(t_\star)}, \quad j = 1,\dots,d_\star.
    \end{align*}
    We conclude
    \begin{align*}
        k_j^\top (\mu - \mu_{\hom}) k_j = 0, \quad j = 1,\dots,d_\star.
    \end{align*}
    Now Lemma \ref{lem:appendix_uniqueness} implies $\mu = \mu_{\hom}$ which proves the claim.
\end{proof}

We are now in the position to show the convergence of the minimizing coefficient.

\begin{theorem} \label{theorem:convergence_parameter}
    Suppose that Assumption \ref{assumption2}-\ref{assumption1} hold. Then,
    \begin{align*}
    \mubar \rightarrow \mu_{\hom} \quad \text{as} \quad \varepsilon \rightarrow 0.
\end{align*}
\end{theorem}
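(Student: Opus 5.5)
We argue by compactness: first show that $\{\mubar\}_{\varepsilon>0}$ is bounded, then show that every accumulation point is a root of the map $F$ from Proposition~\ref{Lemma:uniqueness_of_mu}. Since $F$ has the unique root $\mu_\hom$, every subsequence has a further subsequence converging to $\mu_\hom$, and hence the whole family converges.

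\textbf{Step 1: uniform boundedness of $\mubar$.} Lemma~\ref{lem:existence}(ii) gives coercivity for fixed $\varepsilon$, but with constants that depend on $\varepsilon$, so it cannot be used directly. Instead we work with $\Phi_\varepsilon(\mubar)\le\Phi_\varepsilon(\mu_\hom)\le C$, which holds by the proof of Theorem~\ref{Theorem:consistancy}. The triangle inequality yields
\begin{align*}
\|\Bx\div(\mubar\nabla u_\varepsilon)\|_{L^2(0,T;L^2(\T))}\le C+\|\Bx\partial_{tt}u_\varepsilon\|_{L^2(0,T;L^2(\T))}.
\end{align*}
The second term is bounded uniformly: $\partial_{tt}u_\varepsilon=\div(\mu_\varepsilon\nabla u_\varepsilon)$ is bounded in $L^2(0,T;H^1(\T)^*)$ because $\nabla u_\varepsilon$ is bounded. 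Now argue by contradiction. Suppose $|\mubar|\to\infty$ along a subsequence. Set $\nu_\varepsilon:=\mubar/|\mubar|$ and pass to a further subsequence with $\nu_\varepsilon\to\nu$, $|\nu|=1$, $\nu\in\A$. Then $\Bx\div(\nu_\varepsilon\nabla u_\varepsilon)\to0$ strongly in $L^2(0,T;L^2(\T))$. On the other hand, $\Bx\div(\nu_\varepsilon\nabla u_\varepsilon)\rightharpoonup\Bx\div(\nu\nabla u_\hom)$ weakly. This follows from the weak convergence of $\nabla u_\varepsilon$, the strong convergence $\nu_\varepsilon\to\nu$ in the finite-dimensional space, and the fact that $\Bx\circ\div$ is a bounded linear, hence weakly continuous, map $L^2(0,T;L^2(\T)^d)\to L^2(0,T;L^2(\T))$. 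Therefore $\div(\nu\nabla u_\hom)=0$, using injectivity of $\Bx$.

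\textbf{The main obstacle} is to derive a contradiction from $\div(\nu\nabla u_\hom)=0$. Here we mimic the argument of Proposition~\ref{Lemma:uniqueness_of_mu}. In Fourier variables we get $(k_j^\top\nu k_j)\hat u_{k_j}(t)=0$ for all $t$, where $\hat u_{k_j}$ solves the ODE driven by $\mu_\hom$ with $\hat u_{k_j}(0)=c_{k_j}\neq0$. By continuity $\hat u_{k_j}\not\equiv0$, so $k_j^\top\nu k_j=0$ for $j=1,\dots,d_*$. Lemma~\ref{lem:appendix_uniqueness} then gives $\nu=0$, contradicting $|\nu|=1$. Note that Assumption~\ref{assumption2} is a statement about $v_0$, which is also the initial datum of $u_\hom$, so it applies to $u_\hom$ independently of $\varepsilon$. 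This is what makes the bound uniform.

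\textbf{Step 2: identification of the limit.} Let $\mubar\to\mu_\star$ along a subsequence; then $\mu_\star\in\A$ because $\A$ is closed. We have
\begin{align*}
\Bx(\partial_{tt}u_\varepsilon-\div(\mubar\nabla u_\varepsilon)) \rightharpoonup \Bx F(\mu_\star)
\end{align*}
weakly in $L^2(0,T;L^2(\T))$. This is justified by the same weak-continuity argument, with $\partial_{tt}u_\varepsilon\rightharpoonup\partial_{tt}u_\hom$ in $L^2(0,T;H^1(\T)^*)$ obtained from the weak convergence of $\partial_t u_\varepsilon$ together with the common initial data. To make this precise, test against smooth $\phi$ vanishing at $T$ exactly as in the proof of Theorem~\ref{Theorem:consistancy}; we may equivalently phrase it with the adjoint $\tilde\Box^{-1}$. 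Weak lower semicontinuity of the norm and Theorem~\ref{Theorem:consistancy} then give
\begin{align*}
\|\Bx F(\mu_\star)\|^2_{L^2(0,T;L^2(\T))}\le\liminf_{\varepsilon\to0}\Phi_\varepsilon(\mubar)=0.
\end{align*}
Injectivity of $\Bx$ yields $F(\mu_\star)=0$, and Proposition~\ref{Lemma:uniqueness_of_mu} gives $\mu_\star=\mu_\hom$. The subsequence-of-subsequence argument concludes the proof.
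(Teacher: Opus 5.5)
Your proof is correct. Its skeleton is the paper's: a uniform bound on $\mubar$, a convergent subsequence, identification of the limit as a root of $F$, and Proposition~\ref{Lemma:uniqueness_of_mu}. The boundedness step, however, is argued genuinely differently, and more soundly.

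\textbf{Boundedness.} The paper simply combines Lemma~\ref{lem:existence}(ii) with $\Phi_{\es}(\mubar)\to 0$. As you observe, the constant in $|\mu|\le C\|\Bx\div(\mu\nabla u_\es)\|_{L^2(0,T;L^2(\T))}$ comes from injectivity for one fixed $u_\es$ and hence depends on $\es$. So that citation alone does not give a uniform bound. Your normalization/contradiction argument supplies exactly the missing uniformity by passing injectivity to the limit map $\mu\mapsto\Bx\div(\mu\nabla u_{\hom})$. This works because Assumption~\ref{assumption2} concerns $v_0$, the initial datum common to all $u_\es$ and to $u_{\hom}$.

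\textbf{Identification of the limit.} The paper tests against $\phi\in C_c^\infty((0,T)\times\T)$ and uses $\|v_\es\|_{L^2(0,T;L^2(\T))}\to0$. It also uses a convergence $u_\es\to u_{\hom}$ in $L^2(0,T;L^2(\T))$ that is not literally part of Assumption~\ref{assumption1}. Weak convergence, which follows from $u_\es(0)=v_0$ and $\partial_t u_\es\rightharpoonup\partial_t u_{\hom}$, suffices there. You instead use weak continuity of $\Bx$ and weak lower semicontinuity of the norm. This avoids any convergence of $u_\es$ itself. In exchange it needs a uniform bound on $\partial_{tt}u_\es=\div(\mu_\es\nabla u_\es)$ in $L^2(0,T;H^1(\T)^*)$, i.e.\ a uniform $L^\infty$ bound on $\mu_\es$. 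The paper also uses this tacitly in Theorem~\ref{Theorem:consistancy}.

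\textbf{A shortcut.} Your Step~1 argument, combined with $\Phi_\es(\mu_{\hom})\to0$ from the proof of Theorem~\ref{Theorem:consistancy}, already proves the theorem on its own. Since $\Phi_\es(\mubar)\le\Phi_\es(\mu_{\hom})$, the triangle inequality gives $\|\Bx\div((\mubar-\mu_{\hom})\nabla u_\es)\|_{L^2(0,T;L^2(\T))}\le 2\sqrt{\Phi_\es(\mu_{\hom})}\to0$. Running the same contradiction argument with $\nu_\es=(\mubar-\mu_{\hom})/|\mubar-\mu_{\hom}|$ then yields $\mubar\to\mu_{\hom}$ without a separate identification step.
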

\begin{proof}
    By Theorem \ref{Theorem:consistancy}, we have $\Phi_\es(\mubar)\rightarrow 0$ as $\es\rightarrow 0$. Therefore,  $v_\es\rightarrow 0$ strongly in $L^2(0,T;L^2(\T))$ where $v_\es=\Box^{-1} \big(\partial_{tt} u_{\varepsilon} - \div( \mubar \nabla u_{\varepsilon})\big)$. Let $\phi\in C_c^\infty((0,T)\times\T)$. Using the weak form for $v_\es$, we have
    \begin{align*}
    \int_0^T\langle\partial_{tt}v_\varepsilon,\phi\rangle\,dt +\int_0^T\int_{\T}\nabla v_\es \cdot \nabla \phi \, dx\,dt=\int_0^T\langle \partial_{tt} u_{\varepsilon} - \div ( \mubar \nabla u_{\varepsilon} ), \phi\rangle\,dt.
\end{align*}
Using integration by parts and the Cauchy-Schwarz inequality, we estimate
\begin{align*}
  \int_0^T\int_{\T}u_\es\partial_{tt}\phi \,dx\,dt+ \int_0^T\int_{\T} \mubar \nabla u_\es \cdot \nabla\phi \,dx\,dt&= \int_0^T\langle \partial_{tt} u_{\varepsilon} - \div ( \mubar \nabla u_{\varepsilon}), \phi\rangle\,dt\\
  &=\int_0^T\int_{\T}v_\es (\partial_{tt}
\phi-\Delta \phi)\,dx\,dt\\
&\leq \|v_\es\|_{L^2(0,T;L^2(\T))}\|(\partial_{tt}
\phi-\Delta \phi)\|_{L^2(0,T;L^2(\T))}.
\end{align*}
Since $v_\es\rightarrow 0$ strongly in ${L^2(0,T;L^2(\T)}$ as $\es\rightarrow0$, we obtain
\begin{align}\label{eq:limit_uepsilon_weakform}
  \int_0^T\int_{\T}u_\es\partial_{tt}\phi \,dx\,dt+ \int_0^T\int_{\T} \mubar \nabla u_\es \cdot \nabla\phi \,dx\,dt \rightarrow 0 \quad \text{as } \es \rightarrow 0.
\end{align}
Using the weak coercivity of $\Phi_\varepsilon$ from Theorem \ref{lem:existence} (ii) and the convergence from Theorem \ref{Theorem:consistancy} it follows
\begin{align*}
    \alpha |\mubar| - C \le \Phi_\varepsilon(\mubar) \rightarrow 0 \quad \text{as } \varepsilon \rightarrow 0.
\end{align*}
Therefore, $\mubar$ is uniformly bounded.
Now we can extract any converging subsequence (again denoted by $\mubar$) for which we have $\mubar\rightarrow \mu_*\in \R$. We now show that $\mu_*=\mu_{\hom}$. Again, using that $u_\es$ converges strongly to $u_\hom$ in $L^2(0,T;L^2(\T))$ and \eqref{eq:limit_uepsilon_weakform}, we have
\begin{align*}
    0&=\lim_{\es\rightarrow 0} \int_0^T\int_{\T}u_\es\partial_{tt}\phi \,dx\,dt+  \lim_{\es\rightarrow 0} \int_0^T\int_{\T} \mubar \nabla u_\es \cdot \nabla\phi \,dx\,dt\\
    &=\int_0^T\int_{\T}u_{\hom}\partial_{tt}\phi \,dx\,dt +  \int_0^T\int_{\T} \mu_* \nabla u_{\hom} \cdot  \nabla \phi \,dy \,dx\,dt.
\end{align*}
Thus, it holds in $L^2(0,T;H^1(\T)^*)$ that
    \begin{align*}
        \partial_{tt} u_{\hom} - \div(\mu_* \nabla u_{\hom}) = 0 \quad \text{in } \T \times (0,T).
    \end{align*}
Now, Proposition \ref{Lemma:uniqueness_of_mu} implies $\mu_* = \mu_\hom$ which proves the claim.
\end{proof}

\subsection{Convergence of the solution}

In this section, we aim to prove our second main result regarding consistency, namely the convergence of the solution associated with the minimizing coefficient towards the homogenized solution. We will then conclude that the solution is also a good approximation of the given solution $u_\es$. Naturally, the convergence of $\mubar$ to the homogenized coefficient $\mu_\hom$ will be a key ingredient. We first conclude uniform bounds for the space and time derivatives of $u(\mubar)$ via standard energy estimates.

\begin{proposition} \label{lem:uniform_bound}
Suppose that Assumption \ref{assumption2}-\ref{assumption1} hold. Then, there is $C > 0$ independent of $\varepsilon$ such that
\begin{align*}
    \| \partial_t u(\mubar) \|_{L^\infty(0,T;L^2(\T))} + \| \nabla u(\mubar) \|_{L^\infty(0,T;L^2(\T))} \le C.
\end{align*}
\end{proposition}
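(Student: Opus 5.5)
The plan is to test the effective equation \eqref{mu-wave-equation} with $\partial_t u(\mubar)$ and obtain the standard energy identity. Since $\mubar\in\A$ is constant in space and time, symmetric and positive semidefinite, the energy
\begin{align*}
E_\varepsilon(t) := \tfrac12\|\partial_t u(\mubar)(t)\|_{L^2(\T)}^2 + \tfrac12 \int_{\T} \mubar \nabla u(\mubar)(t)\cdot \nabla u(\mubar)(t)\,dx
\end{align*}
is conserved: $E_\varepsilon(t)=E_\varepsilon(0)=\tfrac12\|v_1\|_{L^2}^2+\tfrac12\int_{\T}\mubar\nabla v_0\cdot\nabla v_0\,dx$. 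Rigorously, this requires a Galerkin or Fourier argument. On the torus with a constant coefficient, each Fourier mode solves $\hat u_k''+(k^\top\mubar k)(2\pi)^2\hat u_k=0$, and $|\hat u_k'|^2+(2\pi)^2(k^\top\mubar k)|\hat u_k|^2$ is constant, so summing over $k$ gives conservation directly. By Theorem \ref{theorem:convergence_parameter}, $\mubar\to\mu_\hom$, so $|\mubar|\le C$ uniformly in $\varepsilon$ small (and for all $\varepsilon$ in a bounded range, using the coercivity bound of Lemma \ref{lem:existence}(ii) together with the uniform bound on $\inf\Phi_\varepsilon$ from Theorem \ref{Theorem:consistancy}). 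Hence $E_\varepsilon(0)\le C(\|v_1\|^2+\|\nabla v_0\|^2)$, which bounds $\|\partial_t u(\mubar)\|_{L^\infty(L^2)}$ uniformly.

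The main obstacle is the gradient. Conservation of energy only controls $\mubar\nabla u\cdot\nabla u$, and $\mubar$ is merely positive semidefinite. The fix is to use the uniform positive definiteness of $\mubar$ for small $\varepsilon$: $\mu_\hom$ is positive definite with smallest eigenvalue $\lambda_\hom>0$, and eigenvalues depend continuously on the matrix, so $\mubar\to\mu_\hom$ gives $\mubar\xi\cdot\xi\ge \tfrac{\lambda_\hom}{2}|\xi|^2$ for all $\varepsilon\le\varepsilon_0$. Then $\|\nabla u(\mubar)(t)\|^2\le \tfrac{4}{\lambda_\hom}E_\varepsilon(0)\le C$.

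For $\varepsilon>\varepsilon_0$, if this range is to be covered at all, I would fall back on a crude Fourier bound. The mean mode grows at most linearly in $t$. For the other modes, $|\hat u_k(t)|\le |c_k|+T|\hat u_k'|_{\max}$ and $|\hat u_k'|\le$ the energy, so $|k|^2|\hat u_k(t)|^2\le C_T(|k|^2|c_k|^2+|k|^2|b_k|^2)$. This last term needs $v_1\in H^1$, which is not available. I therefore expect the statement to be intended for $\varepsilon$ sufficiently small, which is the relevant regime for the asymptotic analysis, and I would phrase the proof for $\varepsilon\le\varepsilon_0$.
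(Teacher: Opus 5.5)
Your argument is essentially the paper's: both use $\mubar\to\mu_\hom$ (Theorem \ref{theorem:convergence_parameter}) and the positive definiteness of $\mu_\hom$ to get $\alpha|\xi|^2\le\mubar\xi\cdot\xi\le\beta|\xi|^2$ for $0<\varepsilon\le\varepsilon_0$, and then conclude from the conserved energy. The paper's proof likewise only covers $\varepsilon\le\varepsilon_0$, and your Fourier-mode justification of energy conservation is a welcome addition. Your parenthetical appeal to Lemma \ref{lem:existence}(ii) for a bounded range of $\varepsilon$ is unnecessary, and it would not work as stated, since the coercivity constants there depend on $u_\varepsilon$ and hence on $\varepsilon$.
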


\begin{proof}
Since $\mubar \rightarrow \mu_\hom$ as $\varepsilon \rightarrow 0$ and since $\mu_\hom$ is positive definite by Assumption \ref{assumption1}, there are constants $\varepsilon_0, \alpha,\beta > 0$ independent of $\varepsilon$ such that for all $0 < \varepsilon \le \varepsilon_0$
\begin{align*}
    0 < \alpha |\xi|^2 \le \mubar \xi \cdot \xi \le \beta |\xi|^2 \quad \text{for all } \xi \in \R^d.
\end{align*}
Standard energy estimates for the wave equation then yield uniformly on $(0,T)$ and for $0 < \varepsilon \le \varepsilon_0$
\begin{align*}
    \| \partial_t u(\mubar) \|_{L^2(\T)}^2 + \alpha \| \nabla u(\mubar) \|_{L^2(\T)}^2 \le \| v_1 \|_{L^2(\T)}^2 + \beta \| v_0 \|_{H^1(\T)}^2.
\end{align*}
This proves the claim.
\end{proof}

We are now in the position to prove our second main result, which is the $L^2$-convergence of  $u(\mubar)$ to the given solution $u_\es$ from Assumption \ref{assumption1}.

\begin{theorem}\label{theorem:strong_convergence_of_u_epsilon}
Suppose that Assumption \ref{assumption2}-\ref{assumption1} hold. Then for $\es\rightarrow 0$, we have
\begin{align*}
     u(\mubar)  \rightarrow u_\hom \quad \text{strongly in } L^2(0,T;L^2(\T)).
\end{align*}
Moreover, 
\begin{align*}
   u_\varepsilon - u(\mubar) \rightarrow 0 \quad \text{strongly in } L^2(0,T;L^2(\T)).
\end{align*} 

\end{theorem}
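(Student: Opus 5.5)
The plan is to prove the first claim by showing that $u(\mubar)$ converges to $u_\hom$ along every subsequence, and then to obtain the second claim by combining this with strong $L^2$ convergence of $u_\es$ to $u_\hom$.

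\textbf{Step 1: compactness for $u(\mubar)$.} By Proposition \ref{lem:uniform_bound}, for $\varepsilon \le \varepsilon_0$ the family $u(\mubar)$ is bounded in $W^{1,\infty}(0,T;L^2(\T)) \cap L^\infty(0,T;H^1(\T))$. Here $\|u(\mubar)(t)\|_{L^2}$ is controlled by $\|v_0\|_{L^2}$ plus $T$ times the bound on $\partial_t u(\mubar)$. The Aubin--Lions lemma, using the compact embedding $H^1(\T) \hookrightarrow L^2(\T)$, then gives relative compactness in $C([0,T];L^2(\T))$, hence in $L^2(0,T;L^2(\T))$. So from any sequence $\varepsilon_n \to 0$ we can extract a subsequence along which
\begin{align*}
u(\mubar) \to u_* \ \text{strongly in } L^2(0,T;L^2(\T)),
\end{align*}
while $\nabla u(\mubar) \rightharpoonup \nabla u_*$ and $\partial_t u(\mubar) \rightharpoonup \partial_t u_*$ weakly in $L^2(0,T;L^2(\T))$.

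\textbf{Step 2: identifying the limit.} Take a test function $\phi \in C^\infty([0,T]\times\T)$ with $\phi(T)=\partial_t\phi(T)=0$. The weak form of \eqref{mu-wave-equation} reads
\begin{align*}
-\int_0^T\!\!\int_{\T} \partial_t u(\mubar)\,\partial_t\phi + \int_0^T\!\!\int_{\T} \mubar\nabla u(\mubar)\cdot\nabla\phi - \int_{\T} v_1\,\phi(0) = 0 .
\end{align*}
Theorem \ref{theorem:convergence_parameter} gives $\mubar\to\mu_\hom$ as matrices. Since a convergent constant matrix times a weakly convergent sequence converges weakly, we can pass to the limit and find that $u_*$ solves \eqref{hom-equation} weakly, with $\partial_t u_*(0)=v_1$. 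The other initial condition $u_*(0)=v_0$ follows from $u(\mubar)(0)=v_0$ and convergence in $C([0,T];L^2(\T))$. Weak solutions of the constant-coefficient wave equation with positive definite $\mu_\hom$ are unique in this class, so $u_* = u_\hom$. Since every sequence has such a subsequence with the same limit, the whole family converges, which proves the first claim.

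\textbf{Step 3: the second claim.} By the triangle inequality,
\begin{align*}
\|u_\es - u(\mubar)\| \le \|u_\es - u_\hom\| + \|u_\hom - u(\mubar)\|,
\end{align*}
so it suffices to show $u_\es\to u_\hom$ strongly in $L^2(0,T;L^2(\T))$. Assumption \ref{assumption1} provides weak convergence of $\nabla u_\es$ and $\partial_t u_\es$, which yields boundedness of both in $L^2(0,T;L^2(\T))$. Together with $u_\es(0)=v_0$, this bounds $u_\es$ in $H^1(0,T;L^2(\T))\cap L^2(0,T;H^1(\T))$. Aubin--Lions again gives strong compactness in $L^2(0,T;L^2(\T))$, and any limit must be $u_\hom$, since the distributional derivatives already identify it given the fixed initial value. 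Therefore the whole sequence converges strongly, and the second claim follows.

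\textbf{Main obstacle.} The hardest step is uniqueness of the limit in Step 2. The weak formulation has to be set up so that both initial conditions pass to the limit, and uniqueness must hold in the low-regularity class $L^2(0,T;H^1)$ with $\partial_t u\in L^2(0,T;L^2)$. I would settle this with the standard Ladyzhenskaya-type uniqueness argument for the constant-coefficient equation, or more simply through the Fourier coefficients: each Fourier coefficient of a weak solution solves a linear ODE with prescribed initial data and is therefore determined uniquely.
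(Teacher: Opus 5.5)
Your proof is correct, but for the first claim it takes a different route from the paper. The paper uses no compactness. It writes the defect equation for $w_\es = u(\mubar)-u_\hom$, whose right-hand side is $\div((\mubar-\mu_\hom)\nabla u(\mubar))$. It then tests this equation with $\phi(t)$ solving $-\div(\mu_\hom\nabla\phi(t))=\partial_t w_\es(t)$, which is the same negative-norm energy trick as in the proof of Theorem~\ref{Theorem:consistancy}. Combined with the uniform bound of Proposition~\ref{lem:uniform_bound}, this gives a differential inequality for $p_\es(t)=(\mu_\hom\nabla\phi,\nabla\phi)+\|w_\es(t)\|^2_{L^2(\T)}$. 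From it follows an explicit bound on $\|u(\mubar)-u_\hom\|$ in terms of $|\mubar-\mu_\hom|$.

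What the paper's route buys is a quantitative stability estimate: any rate for $\mubar\to\mu_\hom$ transfers directly to the solutions, and no separate compactness or uniqueness argument is needed. Your Aubin--Lions plus uniqueness argument gives no rate, but it is softer and does not depend on the specific test-function construction. As a by-product it also gives $\nabla u(\mubar)\rightharpoonup\nabla u_\hom$ and $\partial_t u(\mubar)\rightharpoonup\partial_t u_\hom$, which is essentially the content of the corollary that follows. Your Fourier-mode treatment of uniqueness for weak solutions in the class $H^1(0,T;L^2(\T))\cap L^2(0,T;H^1(\T))$ is sound. You correctly handle the initial condition $u_*(0)=v_0$ separately from the condition on $\partial_t u_*(0)$, which is encoded in the weak form.

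For the second claim, both arguments use the same triangle inequality. The paper simply asserts that $\|u_\es-u_\hom\|_{L^2(0,T;L^2(\T))}\to 0$ follows from Assumption~\ref{assumption1}, although that assumption only provides weak convergence of $\nabla u_\es$ and $\partial_t u_\es$. The same strong convergence is also used tacitly in the proof of Theorem~\ref{theorem:convergence_parameter}. Your Step~3 supplies exactly this missing justification: boundedness from weak convergence, the fixed initial value $u_\es(0)=v_0$, Aubin--Lions compactness, and identification of the limit through its derivatives and initial value.
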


\begin{proof}
 To prove that $u(\mubar)$ strongly converges to  $u_\hom$, we consider $w_\es:=u(\mubar)-u_\hom $ and proceed using energy estimates similar to the proof of Theorem \ref{Theorem:consistancy}. By \eqref{mu-wave-equation} and \eqref{hom-equation}, $w_\es$ solves for every $t \in (0,T)$ and all test functions $\phi \in H^1(\T)$,
\begin{align} \label{eq:defect_wave}
    \langle\partial_{tt}w_\es,\phi \rangle + \int_{\T}\mu_{\hom} \nabla w_\es \cdot \nabla \phi \, dx = \int_{\T} (\mu_{\hom} - \mubar) \nabla u(\mubar) \cdot \nabla \phi\,dx.
\end{align}
Choose $\phi = \phi(t)$ to be the solution of
\begin{align*}
    -\div( \mu_{\hom} \nabla \phi(t)) = \partial_t w_\es(t), \quad \int_{\T} \phi(t) \, dx = 0.
\end{align*}
Then, we have the identities
\begin{align*}
    \langle \partial_{tt} w_\es, \phi \rangle = \frac12 \frac{d}{dt} (\mu_{\hom} \nabla \phi, \nabla \phi) \quad \text{and} \quad \int_{\T} \mu_\hom \nabla w_\es \cdot \nabla \phi \, dx = \frac12 \frac{d}{dt} \| w_\es \|_{L^2(\T)}^2.
\end{align*}
Now let $p_\es(t) = \big(\mu_{\hom} \nabla \phi(t), \nabla \phi(t) \big) + \| w_\es \|_{L^2(\T)}^2$. Then, it follows from \eqref{eq:defect_wave}, the aforementioned identities, and the uniform bound from Proposition \ref{lem:uniform_bound} that
\begin{align*}
    \frac{d}{dt} p_\es(t) \le | \mu_{\hom} - \mubar | \| \nabla u(\mubar)(t) \|_{L^2(\T)} \| \nabla \phi(t) \|_{L^2(\T)} \le C | \mu_{\hom} - \mubar| \sqrt{p_\es(t)}.
\end{align*}
Here we used that since $\mu_\hom$ is symmetric and positive definite we have for some $\alpha > 0$ that
\begin{align*}
    \| \nabla \phi(t) \|_{L^2(\T)}^2 \le \alpha^{-1} (\mu_\hom \nabla \phi(t), \nabla \phi(t)) \le \alpha^{-1} p_\varepsilon(t).
\end{align*}
As in the proof of Theorem \ref{Theorem:consistancy} we can now conclude
\begin{align*}
    \| w_\es \|_{L^2(0,T;L^2(\T))}^2 \le p_\es(T) \le C |\mu_\hom - \mubar| \rightarrow 0 \quad \text{as } \varepsilon \rightarrow 0.
\end{align*}
This proves $u(\mubar)  \rightarrow u_\hom \quad \text{strongly in } L^2(0,T;L^2(\T))$. For the second assertion, we use triangle inequality and obtain
    \begin{align*}
    \| u_\varepsilon - u(\mubar) \|_{L^2(0,T;L^2(\T))} \leq \| u_\varepsilon -u_\hom\|_{L^2(0,T;L^2(\T))} +\|u_\hom -u(\mubar) \|_{L^2(0,T;L^2(\T))}.
\end{align*}
By Assumption \ref{assumption1}, the first term on the right-hand side vanishes as $\varepsilon \rightarrow 0$. Since we proved $u(\mubar)  \rightarrow u_\hom$ strongly in  $L^2(0,T;L^2(\T))$, the second term also goes to $0$ as $\es\rightarrow 0$. This proves the claim.
\end{proof}

Finally, we conclude weak convergence in $L^2(0,T;L^2(\T))$ of the first space and time derivatives.

\begin{corollary}
    Suppose that Assumption \ref{assumption2}-\ref{assumption1} hold. Then for $\es\rightarrow 0$, we have
\begin{itemize}
    \item[i)] $\nabla (u(\mubar)- u_\varepsilon) \rightharpoonup 0$ weakly in $(L^2(0,T;L^2(\T)))^d$,
    \item[ii)] $\partial_t (u(\mubar) - u_\varepsilon)\rightharpoonup 0$ weakly in $L^2(0,T;L^2(\T))$.
\end{itemize}
\end{corollary}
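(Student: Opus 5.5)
The plan is to split each difference through the homogenized solution,
\begin{align*}
\nabla (u(\mubar)- u_\varepsilon) &= \nabla (u(\mubar) - u_\hom) + \nabla(u_\hom - u_\varepsilon),\\
\partial_t (u(\mubar)- u_\varepsilon) &= \partial_t (u(\mubar) - u_\hom) + \partial_t(u_\hom - u_\varepsilon).
\end{align*}
In both lines the second term converges weakly to zero in $L^2(0,T;L^2(\T))$ directly by Assumption \ref{assumption1}, see \eqref{eq:weak_convergence_of_nabla_u_epsilon}. It therefore remains to show $\nabla u(\mubar) \rightharpoonup \nabla u_\hom$ and $\partial_t u(\mubar) \rightharpoonup \partial_t u_\hom$ weakly in $L^2(0,T;L^2(\T))$.

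For this I would use a compactness argument combined with identification of the limit. By Proposition \ref{lem:uniform_bound}, the families $\nabla u(\mubar)$ and $\partial_t u(\mubar)$ are bounded in $L^\infty(0,T;L^2(\T))$, and hence in $L^2(0,T;L^2(\T))$, uniformly for $\varepsilon \le \varepsilon_0$. Given any sequence $\varepsilon_n \to 0$, reflexivity lets us extract a subsequence with $\nabla u(\overline{\mu}_{\varepsilon_n}) \rightharpoonup G$ and $\partial_t u(\overline{\mu}_{\varepsilon_n}) \rightharpoonup V$ for some limits $G$, $V$. Theorem \ref{theorem:strong_convergence_of_u_epsilon} gives $u(\mubar) \to u_\hom$ strongly in $L^2(0,T;L^2(\T))$. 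For a test function $\psi \in C_c^\infty((0,T)\times\T)^d$ we have
\begin{align*}
\int_0^T\int_{\T} \nabla u(\mubar)\cdot \psi \,dx\,dt = -\int_0^T\int_{\T} u(\mubar) \div \psi \,dx\,dt \rightarrow -\int_0^T\int_{\T} u_\hom \div\psi\,dx\,dt = \int_0^T\int_{\T} \nabla u_\hom \cdot\psi \,dx\,dt .
\end{align*}
So $G = \nabla u_\hom$ in the sense of distributions, and hence in $L^2$, since $u_\hom \in L^2(0,T;H^1(\T))$. The identical argument with $\partial_t$ and scalar test functions $\phi\in C_c^\infty((0,T)\times\T)$ gives $V = \partial_t u_\hom$.

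Because the limit does not depend on the chosen subsequence, the standard subsequence argument upgrades this to weak convergence of the whole family. Adding the two weakly null contributions then proves i) and ii).

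The main obstacle is only the identification step: we must make sure the weak limits coincide with the derivatives of $u_\hom$, not just with some $L^2$ functions. This is handled by the density of $C_c^\infty$ in $L^2$ together with the uniform bounds, which are available only for $\varepsilon\le\varepsilon_0$. That restriction is harmless for a statement about the limit $\varepsilon \to 0$.
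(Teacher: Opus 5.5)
Your proof is correct and follows essentially the same route as the paper: uniform bounds from Proposition~\ref{lem:uniform_bound}, extraction of weakly convergent subsequences, identification of the weak limits through the strong $L^2$ convergence of Theorem~\ref{theorem:strong_convergence_of_u_epsilon}, and the subsequence principle. The one difference is that you split through $u_\hom$, handling the $u_\es$ part directly by Assumption~\ref{assumption1}, so you need only the first assertion $u(\mubar)\to u_\hom$ of that theorem. The paper instead applies the compactness argument to the difference $u(\mubar)-u_\es$ and uses the second assertion, and it leaves the identification and subsequence steps implicit, which you spell out.
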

\begin{proof}
    From Assumption \ref{assumption2} and Proposition \ref{lem:uniform_bound}, we have the uniform bounds
    \begin{align*}
        \|\nabla u(\mubar)-\nabla u_\varepsilon\|_{L^2(0,T);L^2(\T))}\leq C\qquad \|\partial_t u(\mubar)-\partial_t u_\varepsilon\|_{L^2(0,T);L^2(\T))}\leq C,
    \end{align*}
    for some $C>0$ independent of $\es$. Then, there are $w_0\in (L^2(0,T;L^2(\T)))^d$ and $w_1\in L^2(0,T;L^2(\T))$ such that, as $\es\rightarrow 0,$
    \begin{align*}
        \nabla u(\mubar)-\nabla u_\varepsilon &\rightharpoonup w_0\quad \mbox{weakly in } (L^2(0,T;L^2(\T)))^d,\\
        \partial_t u(\mubar)-\partial_t u_\varepsilon &\rightharpoonup w_1\quad \mbox{weakly in } L^2(0,T;L^2(\T)).
    \end{align*}
By Theorem \ref{theorem:strong_convergence_of_u_epsilon}, it follows $w_0=0$ and $w_1=0$, which proves the claim.
\end{proof}

\section{Numerical results}
\label{sec:numerical_results}

In this section, we verify our theoretical findings and further assess the proposed optimization approach in non-periodic settings that are not covered by our analysis. In Sections \ref{sec:ex1} and \ref{sec:ex2d}, we consider space--time modulations of the form \eqref{ass:periodic} in one and two space dimensions. In this setting, a limiting system satisfying Assumption \ref{assumption2} exists, and we demonstrate that the optimization approach recovers the homogenized matrix as $\varepsilon \to 0$.

In Section \ref{sec:ex2}, we study a spatially homogeneous but strongly time-modulated coefficient. Although a rigorous homogenization result is not available in this case, formal homogenized equations were derived in \cite{DV26}. We show that the homogenized coefficients obtained are accurately recovered by our optimization approach.

Finally in Section \ref{sec:ex3}, we consider a non-periodic setting with a piecewise constant temporal modulation, where the modulation intensity is determined by random variables, thereby breaking periodicity. We demonstrate that our approach still provides an effective description of the material properties, highlighting its applicability beyond periodic media and supporting its use in more general, unstructured regimes. \\
The implementation of the experiments is available as a MATLAB code on
\begin{center}
\url{https://github.com/cdoeding/OptimizationTimeVaryingMedia}.
\end{center}

\subsection{Periodic coefficient with weak time modulation $d = 1$}
\label{sec:ex1}

For the first model problem we consider \eqref{wave-equation} with $d = 1$, $T = 0.5$ and the space-time coefficient from \eqref{ass:periodic} that is
\begin{align*}
    \mu_\es(x,t) = \mu_0(x/\es) + \es \mu_1(t/\es), \quad 0 < \varepsilon \ll 1
\end{align*}
with
\begin{align*}
    \mu_0(y) = 2 + \sin(2\pi y), \quad \mu_1(\tau) = \sin(2 \pi \tau).  
\end{align*}
As discussed in Section \ref{sec:asymptotic_analysis}, for this parameter set the system \eqref{wave-equation} satisfies Assumption \ref{assumption1}, cf. \cite{CasadoDiaz2014Homogenization}, with the homogenized coefficient
\begin{align} \label{eq:hom_ex1}
    \mu_\hom = \Big( \int_0^1 \mu_0(y)^{-1} \, dy \Big)^{-1} =  \sqrt{3}.
\end{align}
We emphasize that the modulation of the coefficient $\mu_\es$ in time is only weakly enforced due to the $\es$-scaling of the amplitude in the time modulation. This scaling allows one to derive energy estimates of the solution, which enables one to rigorously find the homogenization limit, e.g., by two-scale convergence. For the initial data we set
\begin{align*}
    v_0(x) = \psi(x,0), \quad v_1(x) = \partial_t \psi(x,0), \quad \psi(x,t) = e^{- \frac{1}{200}(t - x)^2}.
\end{align*}
and note that Assumption \ref{assumption2} is satisfied.\\

We apply our optimization approach to the given setting to recover the homogenized coefficient $\mu_\hom$ from \eqref{eq:hom_ex1}. Note that since $d = 1$ the optimal coefficient is given by \eqref{minimum}. We calculate the optimal constant coefficient for the choices 
\begin{align} \label{eps_values}
    \varepsilon = \kappa^{-1}, \quad \kappa \in \{ 10, 20, 40, 60, 80, 100, 120, 160, 200  \}.
\end{align}
The assumed to be given reference solution $u_\es$ as well as the solution to the standard wave equation introduced in \eqref{minimum} through the inverse operator $\Box^{-1}$ are calculated by numerical approximations. For the space discretization we use linear Lagrange finite elements in space on an equidistant mesh of mesh size $h = \tfrac{1}{1024}$ and for the time integration we use a implicit midpoint approximation with time step size $\Delta t = \tfrac{1}{2048}$. For  comparison, we calculate the homogenized solution $u_\hom$ using the same discretization in space and time. \\

\begin{figure}[t]
    \centering
    \includegraphics[width=0.32\linewidth]{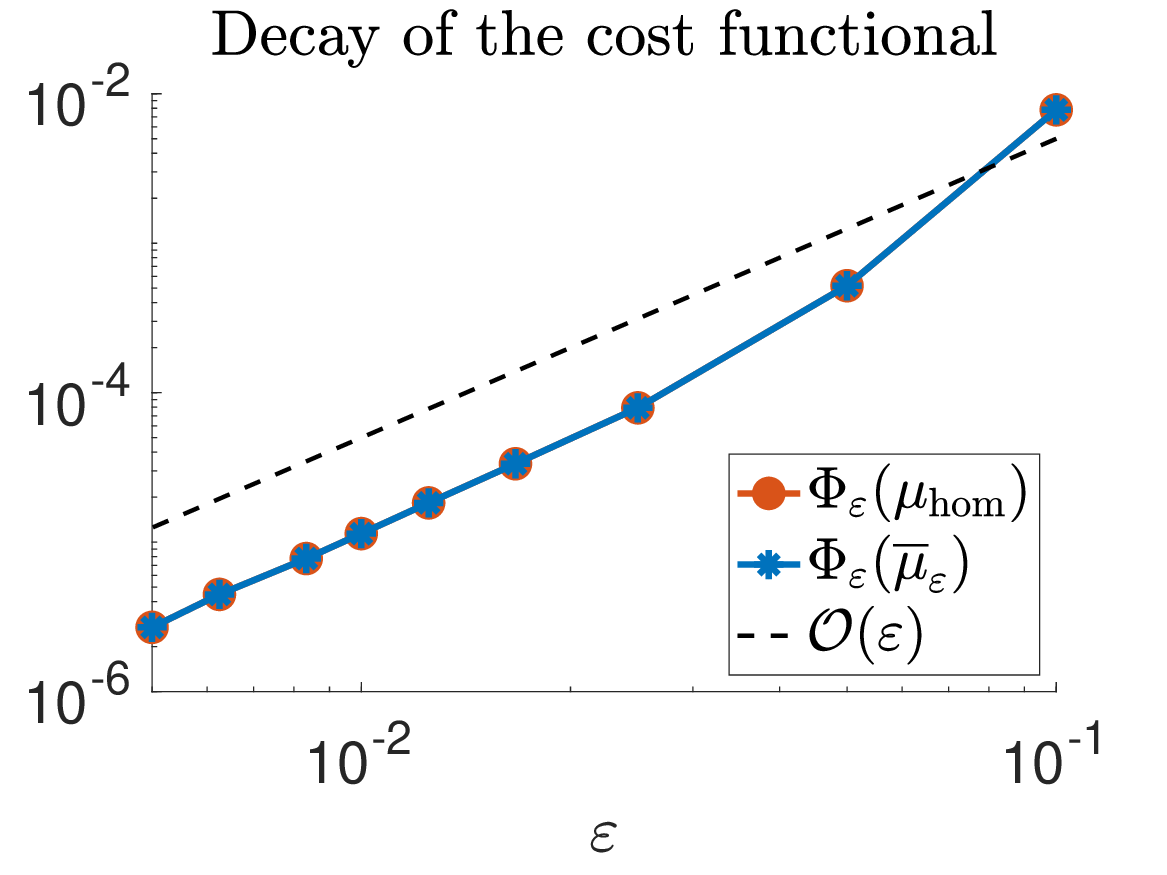}
    \includegraphics[width=0.32\linewidth]{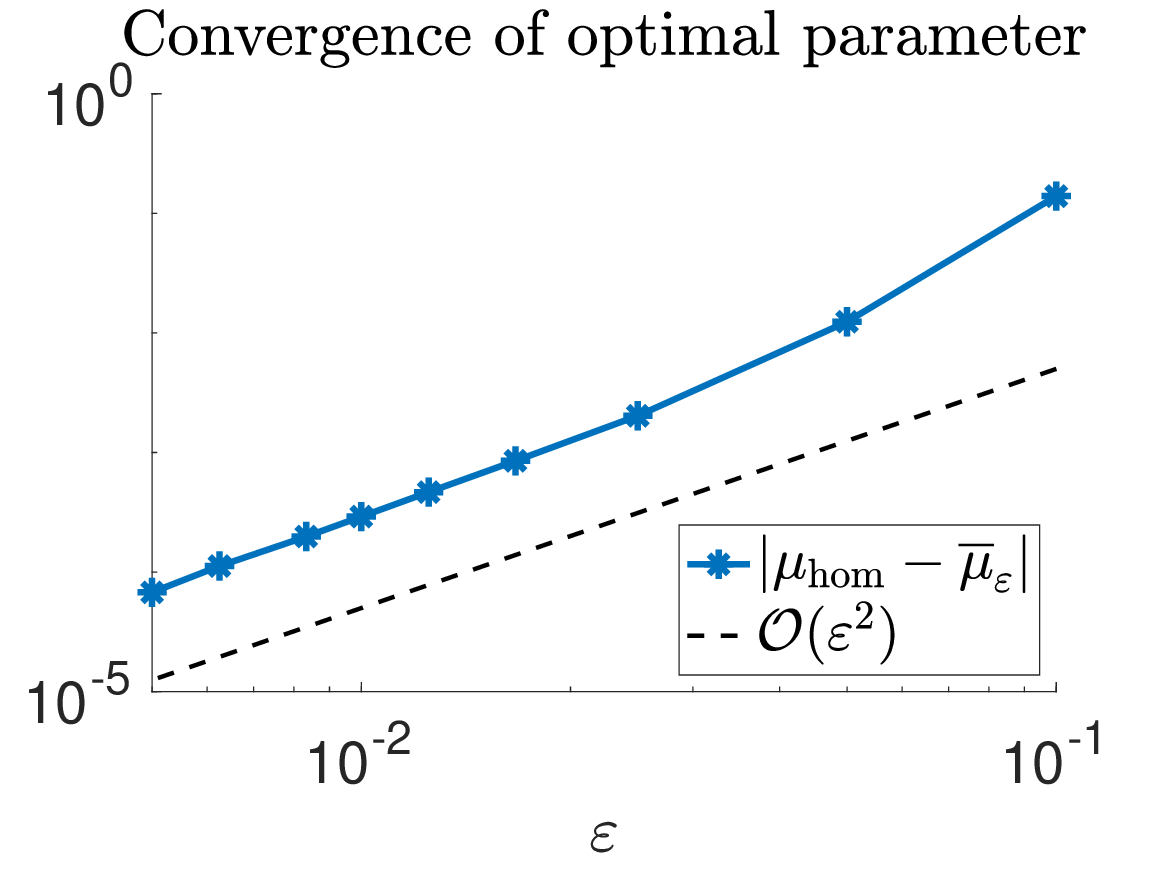}
    \includegraphics[width=0.32\linewidth]{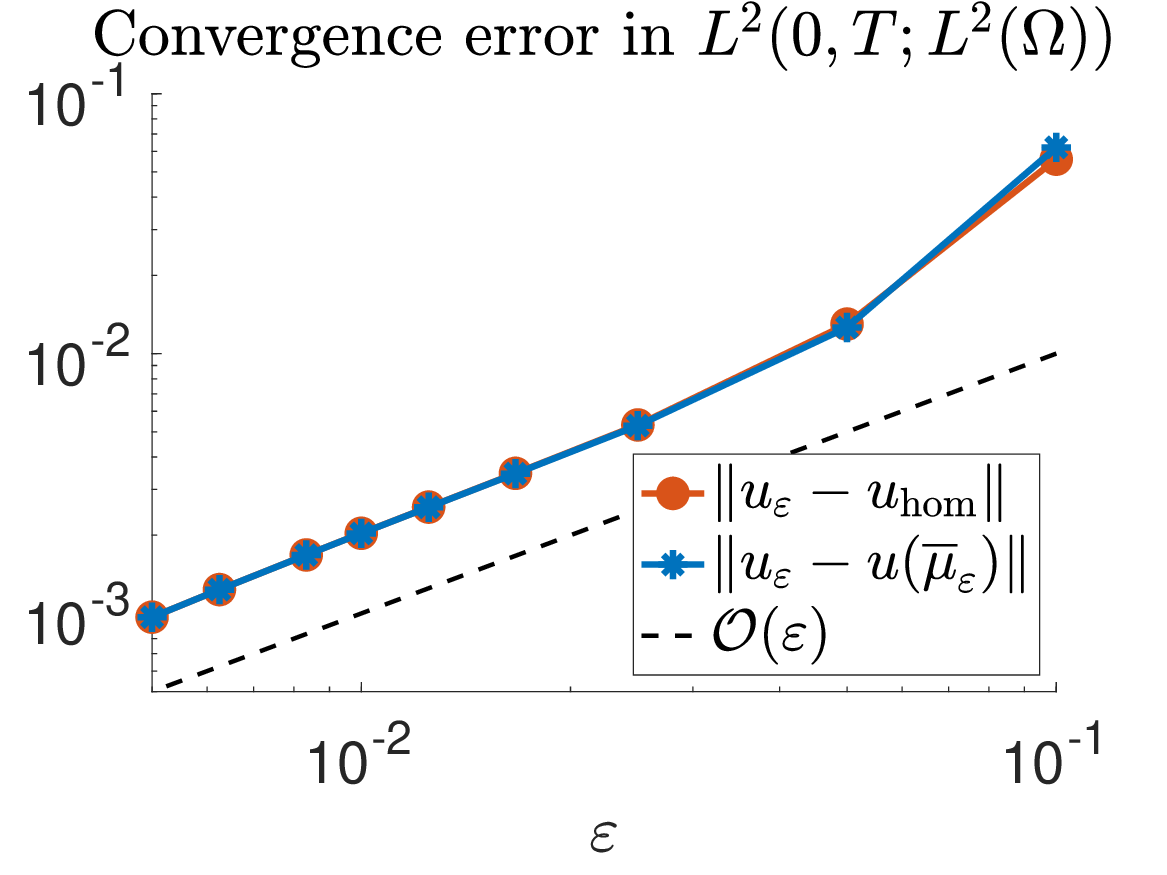}
    \caption{Numerical results for the model problem of Section \ref{sec:ex1} with weak time modulation in $d = 1$. Left: The cost functional evaluated at the optimal parameter $\Phi_\es(\mubar)$ and at the homogenized coefficient $\Phi_\es(\mu_\hom)$. Middle: Convergence of the optimal parameter $\mubar$ towards $\mu_\hom$. Right: Convergence of the errors $\| u_\es - u(\mubar) \|_{L^2(0,T;L^2(\T))}$ and $\| u_\es - u_\hom \|_{L^2(0,T;L^2(\T))}$.} \label{num_ex1}
\end{figure}

The results are presented in Figure \ref{num_ex1}. In the left plot of Figure \ref{num_ex1}, we observe that the minimum of the cost function $\Phi_\es(\mubar)$ decays as predicted by Theorem \ref{Theorem:consistancy}. In particular, we obtain a first-order decay with respect to the microscopic length scale $\es$. The middle plot of Figure \ref{num_ex1} demonstrates that the optimization approach successfully recovers the homogenized coefficient $\mu_\hom$, in agreement with Theorem \ref{theorem:convergence_parameter}. Specifically, we observe second-order convergence of the optimized parameter $\mubar$ towards $\mu_\hom$ as $\es \to 0$. Finally, the right plot of Figure \ref{num_ex1} depicts the corresponding convergence of the solutions w.r.t.~$L^2(0,T;L^2(\T))$. Here we obtain a clear first order decay of the error. Since the optimized coefficient converges to the homogenized coefficient as $\es \to 0$, the solution obtained using the optimized coefficient closely matches the exact solution, with an accuracy comparable to that of the homogenized solution. Overall, this experiment confirms our theoretical results and demonstrates the expected approximation properties of the proposed approach in the periodic setting with weak temporal modulation.

\subsection{Periodic coefficient with weak time modulation $d = 2$}
\label{sec:ex2d}

We now apply the optimization approach to a two-dimensional model problem similar to the previous one-dimensional problem from Section \ref{sec:ex1}. We set $d = 2$, $T = 0.5$, and choose a space-time coefficient with a weak time modulation according to \eqref{ass:periodic} with
\begin{align*}
    \mu_0(y) = \frac{1}{4} \begin{pmatrix}
        2 + \sin(2\pi y_1) & 0 \\ 0 & \big( 2 + \sin(2\pi y_2)\big)^{-1}
    \end{pmatrix}, \quad \mu_1(\tau) = \frac{1}{4} \sin(2 \pi \tau) \, \mathrm{Id}. 
\end{align*}
Again, due to the weak modulation in time, the systems converges to a limiting system satisfying Assumption 
\ref{assumption1} with the homogenized coefficient matrix, cf. \cite{CasadoDiaz2014Homogenization},
\begin{align*}
  \mu_\hom = \frac14 \begin{pmatrix}
      \left(\int_0^1 \tfrac{1}{2 + \sin(2 \pi y)} dy \right)^{-1} & 0 \\ 0 & \left( \int_0^1 2 + \sin(2 \pi y) dy \right)^{-1}
  \end{pmatrix} = \frac14 \begin{pmatrix}
      \sqrt{3} & 0 \\ 0 & \tfrac12
  \end{pmatrix}.
\end{align*}
For the initial data we set
\begin{align*}
    v_0(x) = \psi(x,0), \quad v_1(x) = \partial_t \psi(x,t), \quad \psi(x,t) = e^{-\frac{1}{2\sigma^2} (k \cdot (x- x_0) - c t )^2 }
\end{align*}
with $\sigma = 0.01$, $c = 1$ and $x_0 = k = (\tfrac12,\tfrac12)^\top$. In this case Assumption \ref{assumption2} is fulfilled so that the initial data are rich enough to guarantee the existence of an optimal constant coefficient matrix. \\

\begin{figure}[t]
    \centering
    \includegraphics[width=0.32\linewidth]{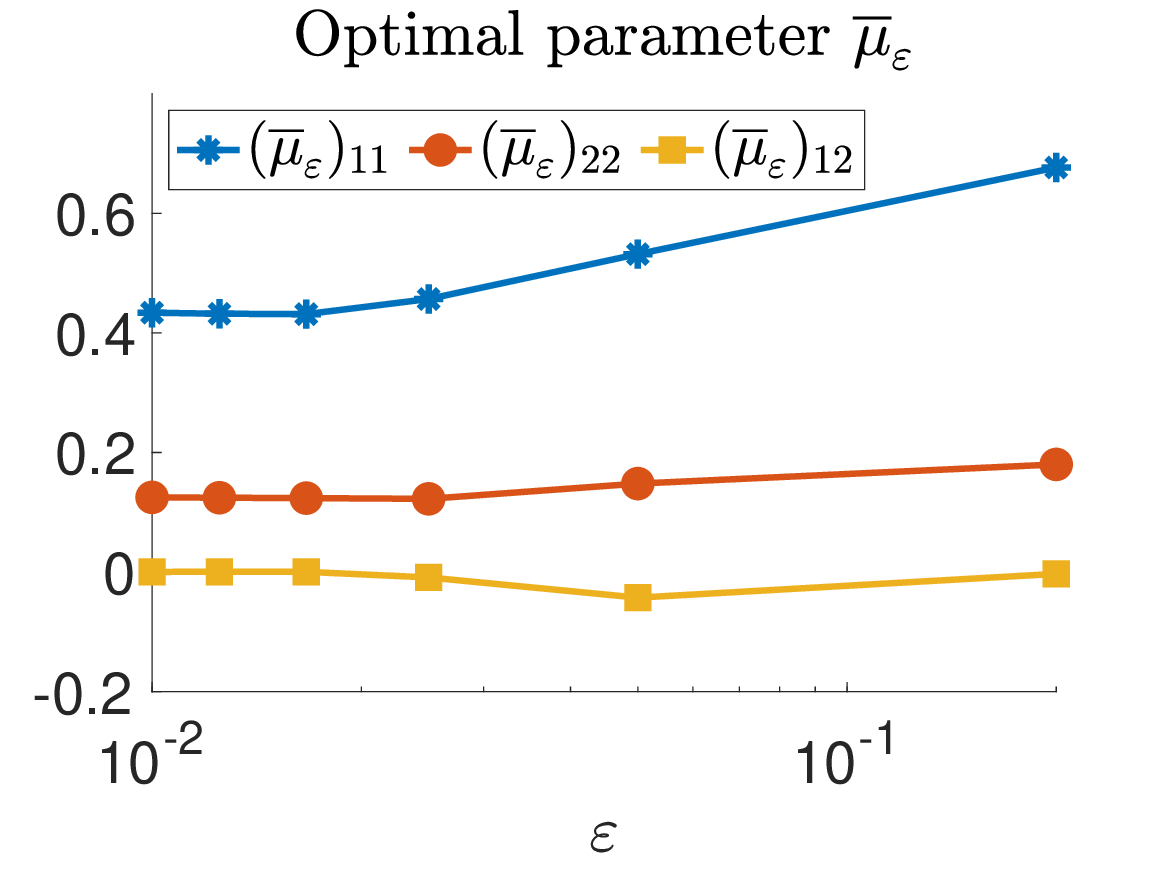}
    \includegraphics[width=0.32\linewidth]{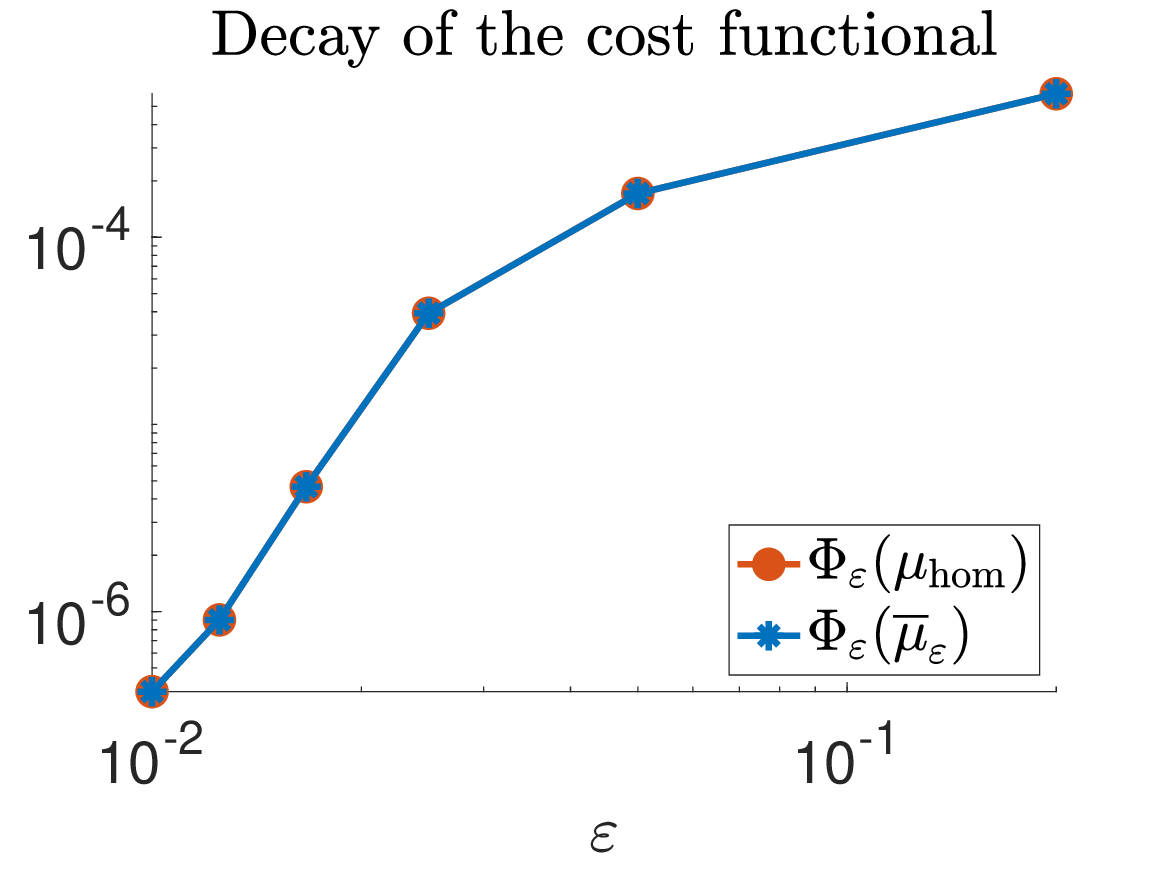} \\
    \includegraphics[width=0.32\linewidth]{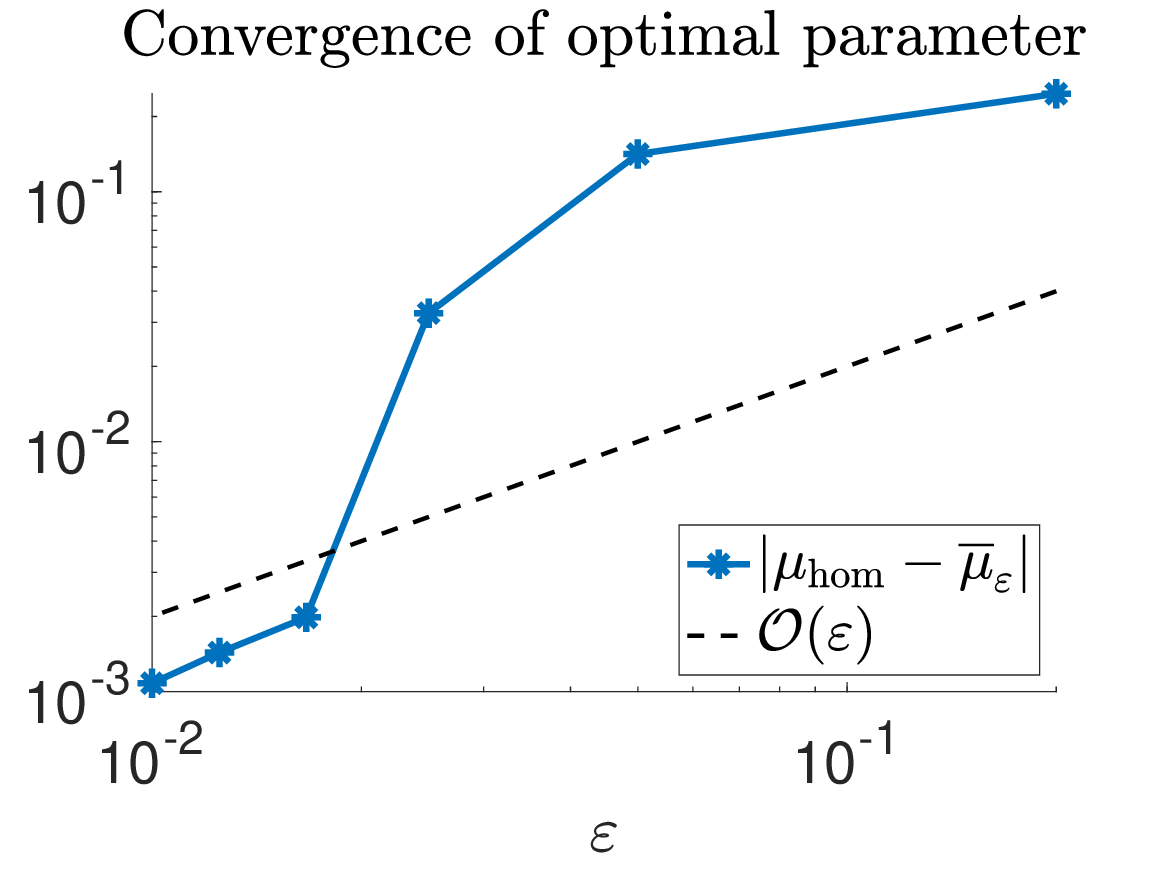}
    \includegraphics[width=0.32\linewidth]{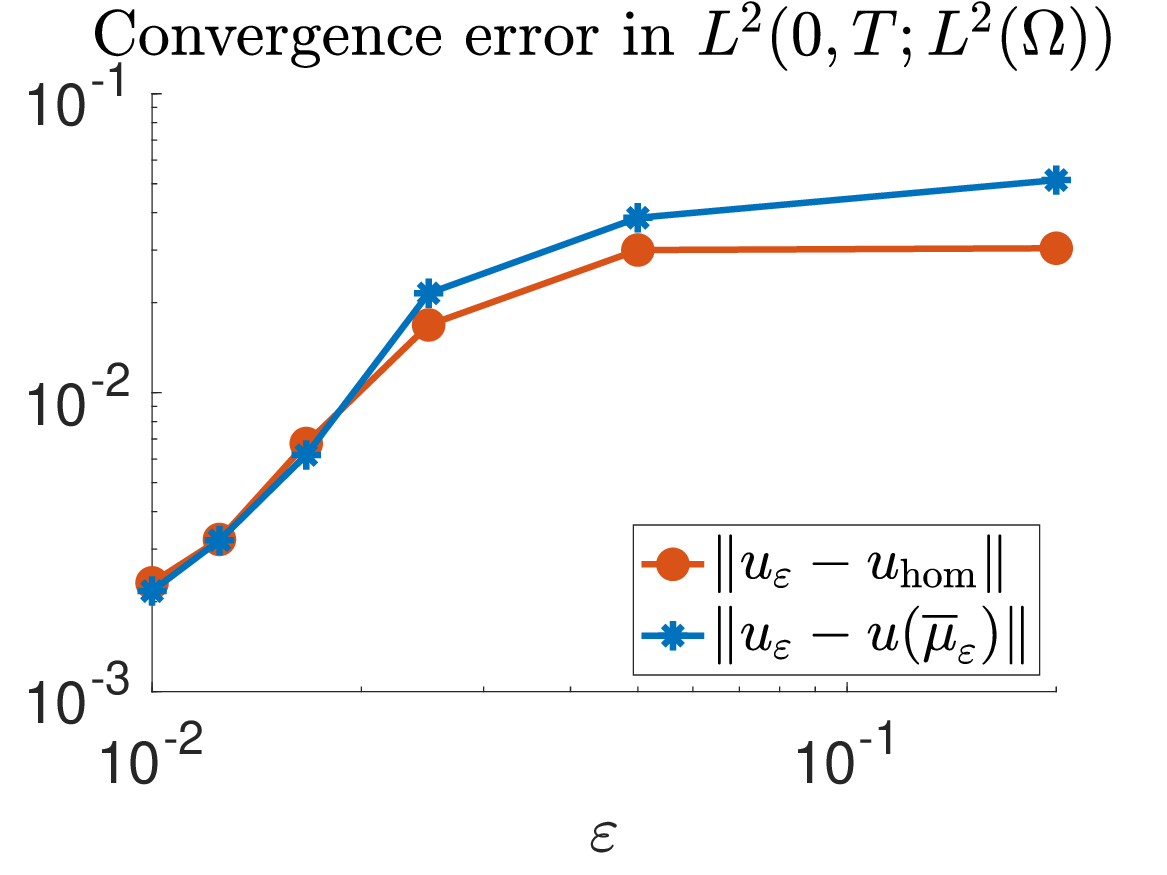}
    \caption{Numerical results for the model problem of Section \ref{sec:ex2d} with weak time modulation in $d = 2$. Top left: The components of the optimal constant coefficient matrix $\mubar$. Top right: The cost functional evaluated at the optimal coefficient matrix $\Phi_\es(\mubar)$ and at the homogenized coefficient matrix $\Phi_\es(\mu_\hom)$. Bottom left: Convergence of the optimal coefficient matrix $\mubar$ towards $\mu_\hom$. Bottom right: Convergence of the errors $\| u_\es - u(\mubar) \|_{L^2(0,T;L^2(\T))}$ and $\| u_\es - u_\hom \|_{L^2(0,T;L^2(\T))}$.} \label{num_ex2d}
\end{figure}

We apply the proposed optimization approach to compute the optimal constant coefficient matrix $\mubar \in \R^{2 \times 2}_{\mathrm{sym}}$ for the length scales $\varepsilon > 0$ specified in \eqref{eps_values}. The optimal coefficient matrix is determined according to \eqref{minimum_if_pos}, following the procedure described in Section~\ref{sec:optimization_approach}. Throughout the computations, we observed that the coefficient matrix $A$ from \eqref{eq:coeff} is positive definite, implying that the minimizer $\mubar$ is unique. \\

For the numerical discretization, we employ linear Lagrange finite elements on a two-dimensional uniform triangulation with mesh size $h = \tfrac{1}{1024}$. Time integration is performed using the implicit midpoint method with time step size $\Delta t = \tfrac{1}{1024}$. \\

The numerical results for the two-dimensional problem are presented in Figure \ref{num_ex2d}. The top-left plot shows the three independent entries of $\mubar \in \mathbb{R}^{2 \times 2}_{\mathrm{sys}}$ and illustrates their convergence to the corresponding entries of the homogenized coefficient matrix $\mu_{\hom}$. This convergence is quantified in the bottom-left plot of Figure \ref{num_ex2d}, where we plot the error between the optimized coefficient and the homogenized coefficient w.r.t.~the $\ell^\infty$-matrix norm. After an initial pre-asymptotic regime, we observe first-order convergence of the error in $\es$. The top-right plot of Figure \ref{num_ex2d} shows the decay of the cost functional. The values are nearly indistinguishable from those obtained by evaluating the cost functional at the homogenized coefficient, indicating that the optimized coefficient achieves a comparable level of accuracy. Finally, the bottom-right plot of Figure \ref{num_ex2d} depicts the error between the solution of the wave equation with the optimized coefficient to the solution $u_\es$ in $L^2(0,T;L^2(\T))$. Although convergence is observed, we cannot clearly identify a corresponding convergence rate. We note that a comparison with the homogenized solution yields a slightly smaller error. However, the small difference is not considered to be significant. Overall, this experiment is consistent with our theoretical results and further demonstrates the applicability of the proposed approach in higher dimensions.

\subsection{Homogeneous coefficient with strong time modulation}
\label{sec:ex2}

In the next experiment we consider the system from Section \ref{sec:ex1} and keep all parameters expect for the time-dependent coefficient $\mu_\es$. We now enforce a strong time modulation in the coefficient by choosing
\begin{align*}
    \mu_\es(x,t) = 0.25 + 0.24\sin(2\pi t/\varepsilon).
\end{align*}
In particular, the coefficient is spatially constant but oscillates in time on the small length scale $\varepsilon > 0$. Due to the strongly enforced time modulation a rigorous homogenization theory is to the best of our knowledge unknown due to the lack of energy estimates in time of the time derivative of the solution $u_\es$. Nevertheless, a formal two-scale expansion for this problem was elaborated in \cite{DV26} where the authors derived the formal homogenized coefficient
\begin{align*}
    \mu_{\hom} = \int_0^\varepsilon \mu_\varepsilon(x,t) \, dt = 0.25.
\end{align*}

\begin{figure}[t]
    \centering
    \includegraphics[width=0.32\linewidth]{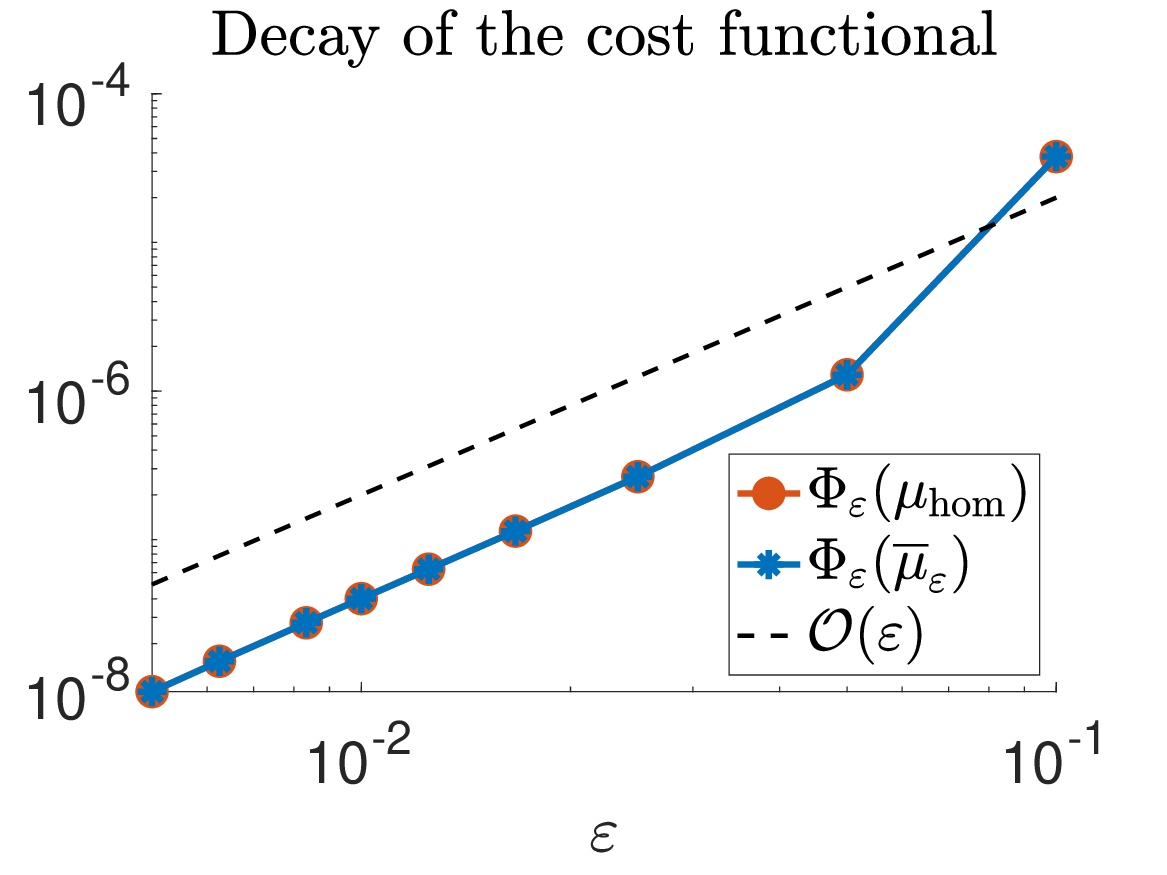}
    \includegraphics[width=0.32\linewidth]{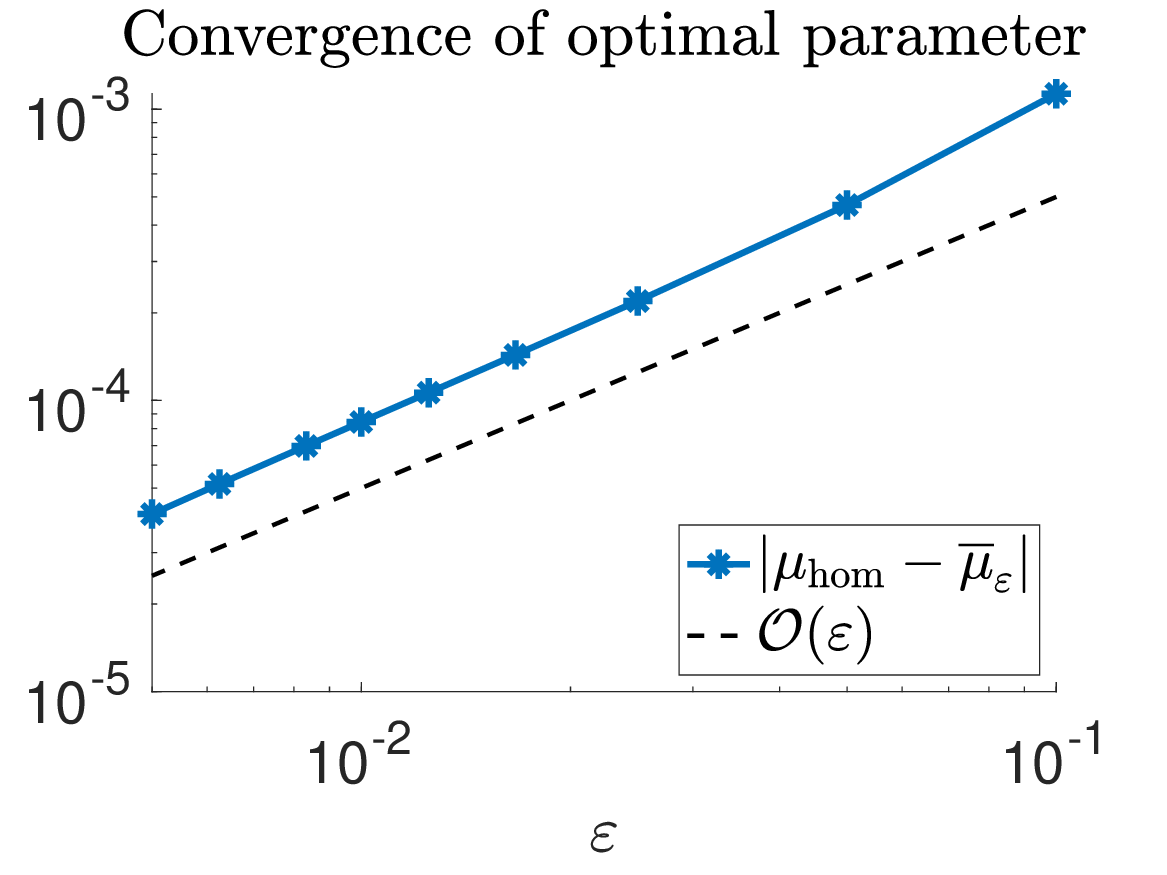}
    \includegraphics[width=0.32\linewidth]{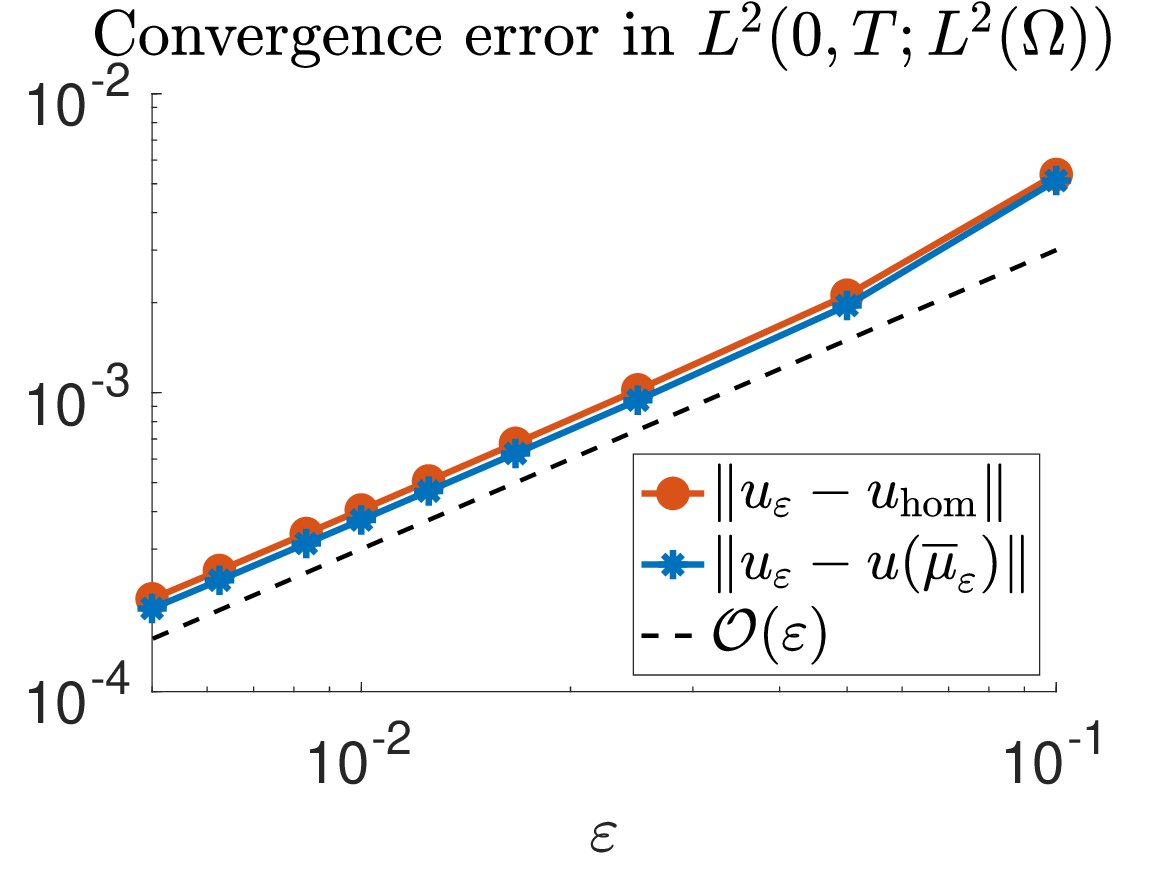}
    \caption{Numerical results for the model problem of Section \ref{sec:ex2} with strong time modulation in $d = 1$. Left: The cost functional evaluated at the optimal parameter $\Phi_\es(\mubar)$ and at the homogenized coefficient $\Phi_\es(\mu_\hom)$. Middle: Convergence of the optimal parameter $\mubar$ towards $\mu_\hom$. Right: Convergence of the errors $\| u_\es - u(\mubar) \|_{L^2(0,T;L^2(\T))}$ and $\| u_\es - u_\hom \|_{L^2(0,T;L^2(\T))}$.} \label{num_ex2}
\end{figure}

The numerical results are presented in Figure~\ref{num_ex2} and exhibit a behavior similar to that observed in Section~\ref{sec:ex1}. As shown in the left plot of Figure~\ref{num_ex3}, the minimum value of the cost functional decays at first order with respect to $\varepsilon$. Moreover, the optimal parameter converges to the homogenized coefficient $\mu_\hom$ at first order, as illustrated in the middle plot of Figure~\ref{num_ex3}. The right plot shows the error between the solution corresponding to the optimally determined constant coefficient and the given solution $u_\es$. Once again, we observe a clear first-order decay. Notably, this error is slightly smaller than the corresponding error between the homogenized solution and the reference solution $u_\es$. \\

Overall, this experiment demonstrates that the proposed approach remains applicable even in problem settings for which a rigorous homogenization theory is unavailable and our assumptions required for the consistency analysis are not satisfied. Nevertheless, the optimization procedure successfully identifies an optimal constant coefficient that accurately captures the dynamics of the reference solution $u_\es$ and which coincides with formally identified homogenized coefficients from \cite{DV26}.

\subsection{Homogeneous coefficient with randomized time modulation}
\label{sec:ex3}

In our final experiment, we break the periodicity of the multiscale coefficient and instead consider a spatially homogeneous, temporally piecewise constant coefficient whose intensity is prescribed by a collection of random variables. More precisely, let $T/\varepsilon = N \in \N$ and define
\begin{align*}
    \mu_\varepsilon(x,t) = \sum_{j=1}^N \beta_j \chi_{I_j}(t),
\end{align*}
where $\beta_j \sim \mathcal{U}(0.1,1)$, $j=1,\ldots,N$, are independent and identically distributed uniform random variables, and $\chi_{I_j}$ denotes the characteristic function of the subintervals $I_j=[(j-1)\varepsilon,\,j\varepsilon)$, each of length $\varepsilon>0$. All other system parameters are again kept as in Section \ref{sec:ex1}. \\

We choose this system as a prototype of an unstructured coefficient for which no homogenization results are available to the best of our knowledge and demonstrate that our optimization approach remains applicable. We apply the optimization procedure using the same numerical discretization as in Section~\ref{sec:ex1}. The results are shown in Figure~\ref{num_ex3} for three independent realizations of the random variables $\beta_j$ for each prescribed length scale $\varepsilon>0$ from \eqref{eps_values}.

\begin{figure}[t]
    \centering
    \includegraphics[width=0.32\linewidth]{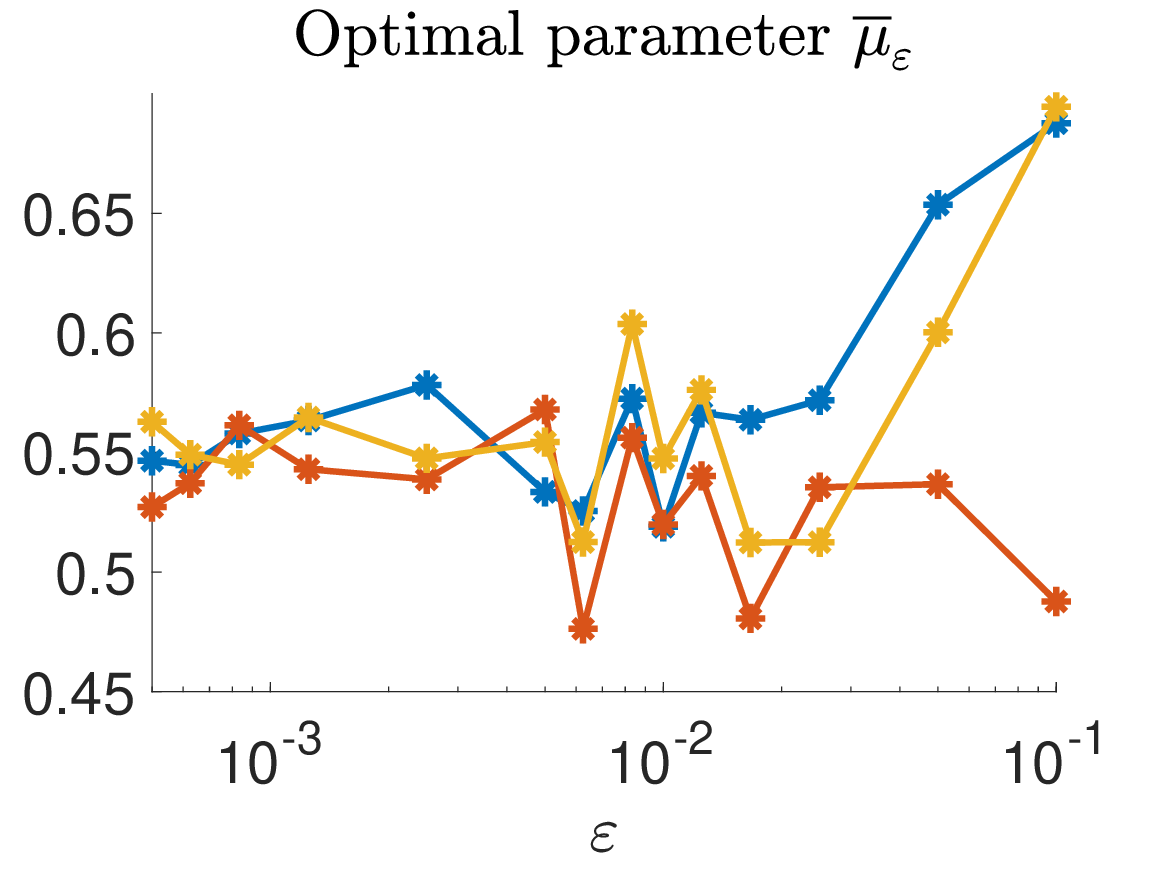}
    \includegraphics[width=0.32\linewidth]{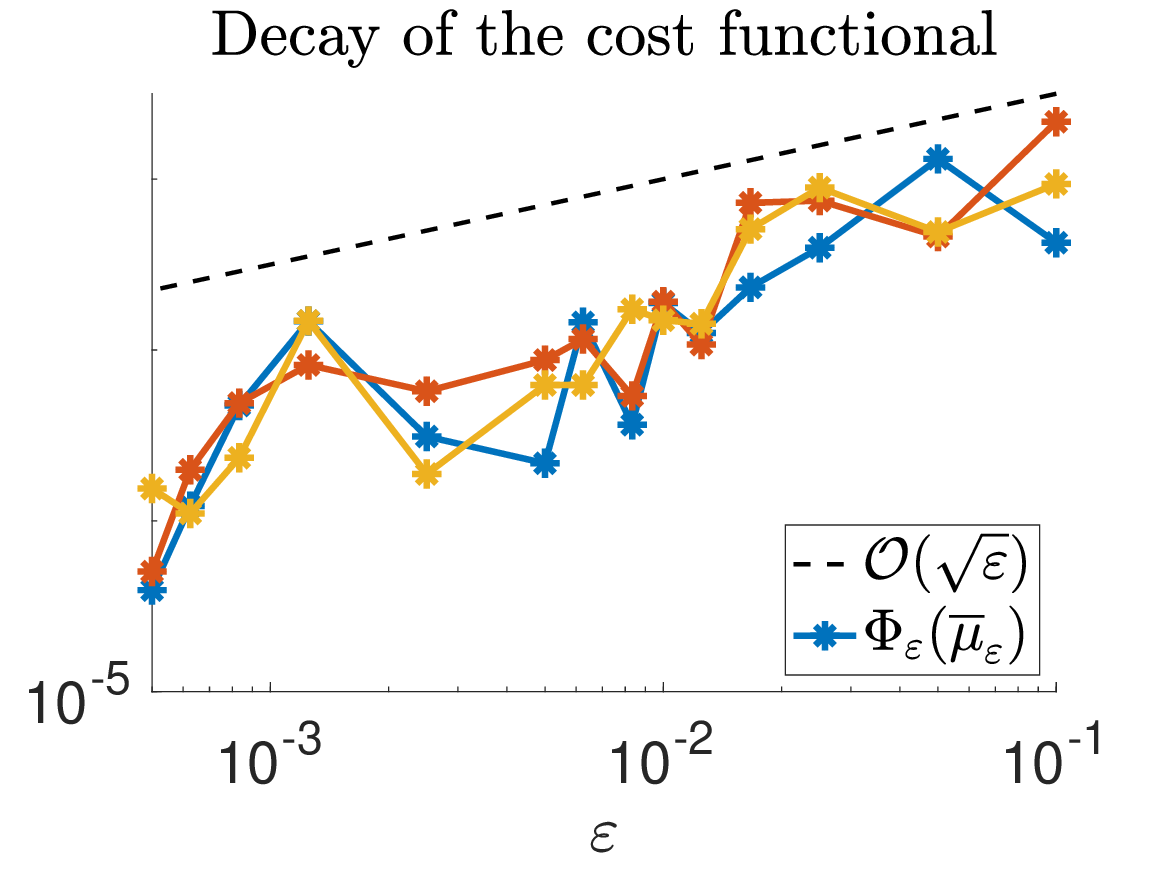}
    \includegraphics[width=0.32\linewidth]{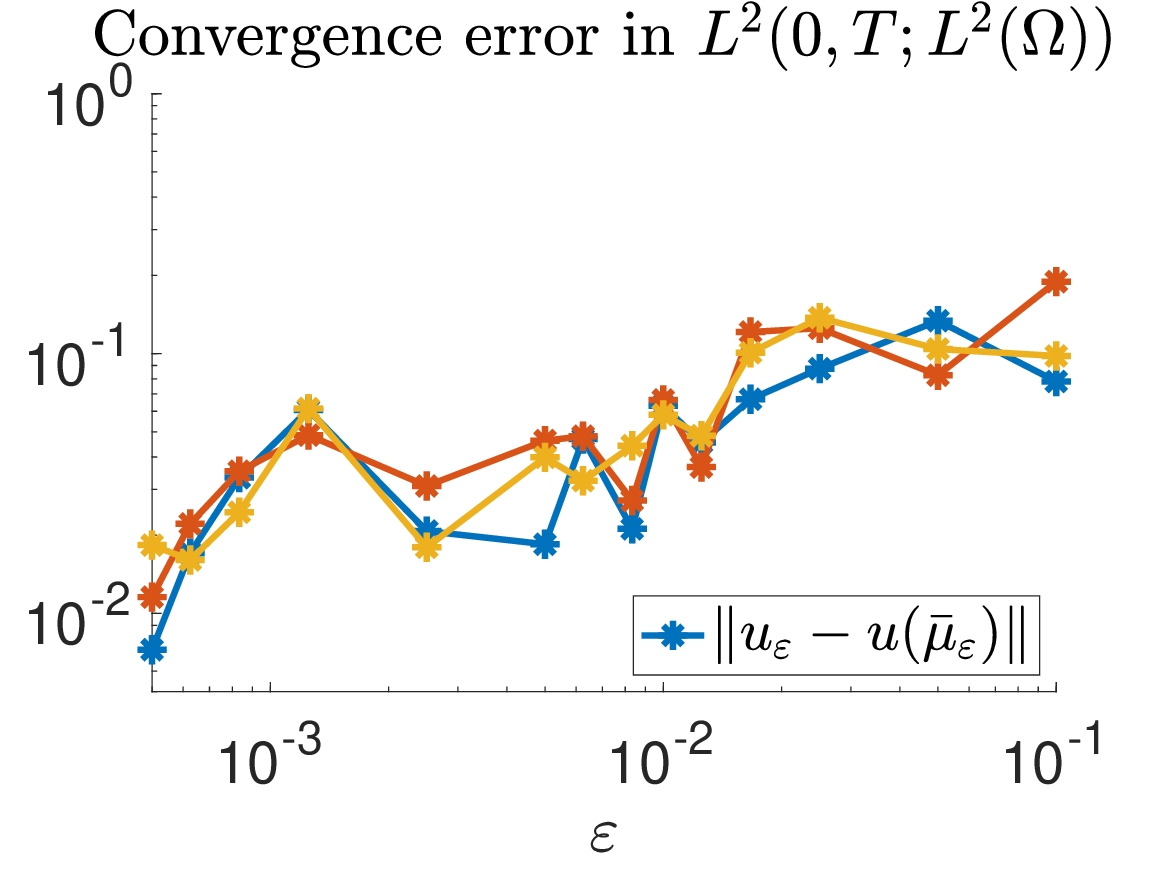}
    \caption{Numerical results for the model problem of Section \ref{sec:ex3} with randomized time modulation in $d = 2$ for three different sample realizations. Left: The optimal coefficient $\mubar$. Middle: The cost functional evaluated at the optimal coefficient $\Phi_\es(\mubar)$. Right: Convergence of the errors $\| u_\es - u(\mubar) \|_{L^2(0,T;L^2(\T))}$.} \label{num_ex3}
\end{figure}

From the left plot of Figure~\ref{num_ex3}, we observe that as $\varepsilon \to 0$ the optimal parameter $\mubar$ approaches a value of approximately $0.55$, which is consistent with the expected value of the random variables $\beta_j$. Moreover, the individual realizations exhibit noticeable variability due to the stochastic nature of the coefficient. The middle plot of Figure~\ref{num_ex3} shows the decay of the cost functional, which appears to converge at a rate close to $\varepsilon^{1/2}$, again in agreement with the expected behavior. Finally, the right panel of Figure~\ref{num_ex3} displays the convergence error of the corresponding solutions, which likewise exhibits a relatively slow decay. Overall, this experiment further demonstrates that our optimization approach is not restricted to settings in which a homogenized limit is known to exist. Instead, it remains applicable to systems with significantly less structure, such as coefficients without periodicity or any known homogenization theory.

\section{Conclusion}

In this paper, we presented an optimization-based approach for identifying effective coefficients of wave equations with space-time heterogeneous coefficients. Unlike classical homogenization, our method does not require knowledge of the coefficients or their underlying structures beforehand. Instead, it directly determines an effective constant matrix based on given data of the solution to the wave equation. This makes our method attractive for applications where the microscopic material structure or its temporal modulation is unknown.

The effective matrix is obtained by minimizing a suitable cost functional. We have proven the well-posedness of the optimization problem and the asymptotic consistency of the approach. When a homogenized limit exists, the identified matrix converges to the homogenized coefficient as the microscopic scale approaches zero. Additionally, the solution to the wave equation with the optimized constant coefficient converges to the homogenized solution. Our numerical results confirm these observations and demonstrate the applicability of the approach in cases where our analysis is not applicable.

Although we presented our analysis for periodic boundary conditions, the optimization framework and analytical techniques naturally extend to other boundary conditions. Additionally, we assumed that the given solution data were available exactly. In practice, however, measurement data are affected by noise and experimental errors. Future research may focus on extending the present framework to account for such perturbations. Finally, we emphasize that the proposed method is not intended to be a computationally efficient numerical homogenization technique. Rather, it complements existing multiscale methods by addressing the inverse problem of identifying effective models directly from observed wave propagation.

\appendix
\section{Appendix}

\begin{lemma} \label{lem:appendix_uniqueness}
    Let $k_j \in \Z^d$, $j = 1,\dots, d_* := \tfrac{d(d+1)}{2}$ be a collection of vectors such that
    \begin{align*}
        \mathrm{span}\big\{ k_j k_j^\top : j = 1, \dots, d_*\big\} = \Rsym
    \end{align*}
    and let $\mu \in \Rsym$ with
    \begin{align} \label{appendix_coeff_system}
        k_j^\top \mu k_j = 0, \quad j= 1,\dots,d_*.
    \end{align}
    Then, $\mu = 0$ follows.
\end{lemma}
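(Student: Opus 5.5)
The plan is a duality argument with respect to the Frobenius inner product on $\Rsym$, namely $\langle A, B\rangle_F := \mathrm{tr}(A^\top B) = \sum_{i,l} A_{il} B_{il}$. This is a genuine inner product on the finite-dimensional space $\Rsym$. The key identity is that for any $k \in \R^d$ and $\mu \in \Rsym$,
\begin{align*}
    k^\top \mu k = \sum_{i,l=1}^d k_i \mu_{il} k_l = \mathrm{tr}(\mu \, k k^\top) = \langle \mu, k k^\top \rangle_F .
\end{align*}
Consequently, the hypothesis \eqref{appendix_coeff_system} says exactly that $\mu$ is Frobenius-orthogonal to each rank-one matrix $k_j k_j^\top$, $j = 1,\dots,d_*$.

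The argument then proceeds in three steps.

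\textbf{Step 1.} Note that each $k_j k_j^\top$ is symmetric, so it lies in $\Rsym$.

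\textbf{Step 2.} Use linearity of $\langle \mu, \cdot \rangle_F$ to extend the orthogonality from the individual $k_j k_j^\top$ to their span. By assumption this span is all of $\Rsym$, so $\langle \mu, B \rangle_F = 0$ for every $B \in \Rsym$.

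\textbf{Step 3.} Since $\mu \in \Rsym$ itself, we may take $B = \mu$. This gives $\|\mu\|_F^2 = \sum_{i,l} \mu_{il}^2 = 0$, hence $\mu = 0$.

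Alternatively, one can phrase the argument in the coordinates \eqref{identification}. The linear map $L: \Rsym \to \R^{d_*}$, $\mu \mapsto (k_j^\top \mu k_j)_{j=1}^{d_*}$, is the adjoint of $c \mapsto \sum_j c_j k_j k_j^\top$, which is surjective onto $\Rsym$ by hypothesis. Hence $L$ is injective, which is the same conclusion.

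There is no real obstacle here. The only point requiring care is the identity $k^\top \mu k = \langle \mu, k k^\top \rangle_F$, which converts the quadratic-form conditions into orthogonality conditions, together with the observation that $\mu$ itself belongs to the spanned space $\Rsym$. Note that the integrality $k_j \in \Z^d$ plays no role in this lemma.
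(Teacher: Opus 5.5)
Your proof is correct and is essentially the paper's argument in coordinate-free form. The paper identifies $\mu$ with $\bfmu\in\R^{d_*}$ and builds the square matrix $K$ whose $\ell$-th row lists the entries of $k_\ell k_\ell^\top$ with the off-diagonal entries doubled; this doubling is exactly the Frobenius weighting behind your identity $k^\top\mu k=\langle\mu,kk^\top\rangle_F$, and it gives $(K\bfmu)_\ell=k_\ell^\top\mu k_\ell$. The paper then concludes from the invertibility of $K$, which is your alternative injectivity argument.

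Your orthogonality formulation is slightly cleaner. It uses only the spanning hypothesis, so it works for any finite family of vectors and not just $d_*$ of them. It also sidesteps the dimension count needed to see that the weighted rows of $K$ are linearly independent, which the paper merely asserts.
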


\begin{proof}
    Recall the map $(i,j) \mapsto [i,j]$ from \eqref{enumeration} that enumerates the set of index-tuples $\{ (i,j): 1 \le i \le j \le d \}$. As in \eqref{identification}, we identify the matrix $\mu \in \Rsym$ uniquely with the vector $\bfmu \in \R^{d_*}$. Similarly, we define the vectors $\mathbf{k}^\ell \in \R^{d_\star}$, $\ell = 1,\dots,d_\star$ by
    \begin{align*}
        \mathbf{k}^\ell_{[i,j]} = \begin{cases} (k_\ell k_\ell^\top)_{ii}, & 1 \le i = j \le d \\
        2(k_\ell k_\ell^\top)_{ij}, & 1 \le i < j \le d \end{cases}.
    \end{align*}
    Since the matrices $k_\ell k_\ell^\top$, $\ell = 1,\dots,d_\star$ span the entire space $\Rsym$, it is a simple matter of fact that the vectors $\mathbf{k}^\ell$, $\ell = 1,\dots,d_\star$ are linearly independent. Therefore, the matrix
    \begin{align*}
        K = (\mathbf{k}^1, \dots, \mathbf{k}^{d_\star})^\top \in \R^{d_\star \times d_\star}
    \end{align*}
    is invertible. We compute, for $\ell = 1,\dots,d_*$,
    \begin{align*}
        (K \boldsymbol{\mu})_\ell = \sum_{[i,j] = 1}^{d_\star} K_{\ell,[i,j]} \boldsymbol{\mu}_{[i,j]} = \sum_{i,j = 1}^{d} (k_\ell)_i \mu_{ij} (k_\ell)_j = k_\ell^\top \mu k_\ell.
    \end{align*}
    Thus \eqref{appendix_coeff_system} reads, $K \boldsymbol{\mu} = 0$. Since $K$ is invertible, this proves the claim. 
\end{proof}

\section*{Acknowledgement}
This work is funded by the Deutsche Forschungsgemeinschaft (DFG, German Research Foundation) – Project-ID 258734477 – SFB 1173 and under Germany's Excellence Strategy – EXC-2047/1 – 390685813.

The authors would like to thank Puneet Garg, Michael Plum, and Carsten Rockstuhl for valuable discussions on the topic of this work.

\end{document}